\documentclass[12pt]{amsart}
\usepackage{graphicx}

\usepackage{amsmath,amsthm,amsfonts,amssymb,verbatim}

\usepackage{amsmath,hyperref}
\usepackage{graphics}
\usepackage[all]{xy}
\usepackage{ulem}
\usepackage[utf8]{inputenc}

\usepackage{amsmath}
\usepackage{amscd}


\newcommand{\CC}{{\mathbb C}}
\newcommand{\NN}{{\mathbb N}}
\newcommand{\ZZ}{\mathbb Z}
\newcommand{\QQ}{{\mathbb Q}}

\def\A{{\mathcal A}}

\def\L{{\mathcal L}}

\def\S{{\mathcal S}}

\def\dist{{\rm dist}}
\numberwithin{equation}{section}


\DeclareMathOperator{\Norm}{{\rm Norm}}

\DeclareMathOperator{\Aut}{{\rm Aut}}

\DeclareMathOperator{\GL}{{\rm GL}}

\newtheorem{theo}{Theorem}
\newtheorem{proposition}[theo]{Proposition}
\newtheorem{coro}[theo]{Corollary}
\newtheorem{lemma}[theo]{Lemma}

\begin{document}

\title[Centralizers of Cantor systems]{Realization of big centralizers of minimal aperiodic actions on the  Cantor set.}
  
  \author[Cortez]{Mar\'{\i}a Isabel Cortez}%
  \thanks{The research of the first author was supported by proyecto Fondecyt Regular No. 1190538.}
\address{Departamento de Matem\'atica y Ciencia de la Computaci\'on, Universidad de Santiago de Chile}  \email{maria.cortez@usach.cl}

\author[Petite]{Samuel Petite}
  \thanks{This research was supported through the cooperation project MathAmSud DCS 38889 TM.}
\address{ Laboratoire Ami\'enois
de Math\'ematique Fondamentale et Appliqu\'ee, CNRS-UMR 7352, Universit\'{e} de Picardie Jules Verne}\email{samuel.petite@u-picardie.fr}  
  
  \date{Dated: \today}  
  
  \maketitle
  
\begin{abstract}
{\noindent  In this article we study the centralizer  of a minimal aperiodic action of a countable group on the Cantor set (an aperiodic minimal Cantor system). 
We show that  any countable  residually finite group is the subgroup of the centralizer of some minimal $\ZZ$ action on the Cantor set, and that any countable group is the subgroup of the normalizer  of a minimal aperiodic action of an abelian countable free group on the Cantor set. On the other hand we show that for any countable group $G$, the centralizer of any minimal aperiodic $G$-action on the Cantor set is a subgroup of the centralizer of a minimal $\ZZ$-action.  }
\end{abstract}
  
 \section{Introduction}
   An {\it automorphism}   of  the topological dynamical system $(X, T, \Gamma)$ given by the continuous action $T:\Gamma\times X\to X$ of a countable group $\Gamma$  on the  (compact) topological space  $X$, is a self-homeomorphism of $X$ commuting with each transformation  $T(g,\cdot)$.  A classical question in dynamics is to understand the algebraic properties of the group of all  the automorphisms, also called {\it centralizer}, of a prescribed dynamical system and their relationships with the dynamical properties of the system.  In this paper we focus on  the case when the space $X$ is a Cantor set. One of the reasons for this choice is the topology of the Cantor set does not restrict the algebraic properties  of the groups of homeomorphisms because  any countable group acts faithfully on the Cantor set. This situation is very different in other spaces like manifolds. For instance,  only extensions of orderable  group can act faithfully on the circle \cite{Witte}. For higher dimensional compact manifolds, the restrictions of diffeomorphisms groups  fall in the scope of R. J. Zimmer's conjectures.

 \noindent 
The dynamical properties determine important restrictions on the  groups of automorphisms  that can be realized.  
For instance, Hedlund observed that the automorphism group of the dynamical systems  given by an expansive action  is always countable. 
\noindent 
The subsets which are invariant by the dynamics  can also  restrict the automorphism group \cite{SchraudnerSalo}.  For instance, in the case of irreducible $\ZZ$-subshift  of finite type, every automorphism preserves the finite set of periodic points of a given period  and since the periodic points are dense, their restrictions separate the automorphisms implying that the group of automorphisms is residually finite \cite{BLR}. 
 To avoid such limitations, we will focus on dynamical systems which are minimal, i.e, systems without proper invariant closed subsets. 
In this case, none of the former restrictions appears: the centralizer can be uncountable as  in the odometer action, or not residually finite as it was shown in \cite{BLR}, where the authors construct a minimal subshift whose centralizer contains a group isomorphic to the rationals $\QQ$.   
However it appears from recent works that the centralizers of zero entropy minimal subshifts  are very limited \cite{BaakeRobertsYassawi, CovenQuasYassawi, CyrKra, CyrKra2, CyrKra3, CyrKra5,DonosoDurandMaassPetite,DonosoDurandMaassPetite17}.  
  
  \noindent 
Another motivation to focus our attention on the centralizer of a minimal Cantor system comes from the study of (topological) full groups.  From the work of Juschenko and Monod \cite{JuschenkoMonod}, it is  known that the topological full group of a $\ZZ$ Cantor minimal system is a countable amenable group. Thanks to this result together with   those shown in \cite{Matui} by Matui,  the commutator  of the topological full groups of minimal $\ZZ$-subshifts become the first known examples of infinite groups which are amenable, simple and finitely generated. On the other hand,
Giordano, Putnam, and Skau  \cite{GPS99}, and Medynets  \cite{Medynets} prove that abstract isomorphisms between full groups of Cantor minimal systems have a topological realization. More precisely, they show the outer  automorphism  group of a topological full group (of a  $\ZZ$ Cantor minimal system $(X,T, \ZZ)$) is isomorphic to the {\it normalizer} of transformations $T(n, \cdot), n \in \ZZ $. Since the centralizer is a normal subgroup of the normalizer, the study of automorphisms provides informations about the outer automorphism group of full groups.

 
 \noindent 
 In this paper we study realization properties of the centralizer of  aperiodic Cantor minimal system $(X,T,\Gamma)$, i.e,  when $X$ is a Cantor set and the action of the group $\Gamma$ on $X$ is  continuous, free and minimal (any countable group $\Gamma$  admits such an action, see   \cite{HjorthMolberg} or \cite{ABT} for an expansive action).

 \noindent 
 After recalling  necessary background on automorphisms and Cantor minimal systems that we call {\it generalized subshift}, we prove in Section \ref{sec:RealCountableGroup} that any countable group may appear in the normalizer of a  Cantor minimal aperiodic action of a free abelian group. The main result of Section \ref{sec:blackbox} shows that the centralizer of any aperiodic  Cantor minimal system given by the action of a countable group is a subgroup of the centralizer of a  $\ZZ$ minimal  Cantor system.
 
 \noindent 
We prove it is possible to realize  any residually finite group, possibly  infinitely generated, as a subgroup of the centralizer of a $\ZZ$ Cantor minimal system (Proposition \ref{cor:realizationResiduallyFiniteGroup}). 
Recently and independently of our work, Glasner, Tsankov, Weiss and Zucker extend this result by proving that any countable subgroup $G$ of a compact topological group embeds into the centralizer of an aperiodic minimal Cantor system \cite[Theorem 11.5]{GTWZ}.  


 \section{Definitions and background.}
 We say that $(X, T, \Gamma)$ is a {\em Cantor system} if $T:\Gamma\times X\to X$ is a continuous action on the Cantor set $X$.  For every $\gamma\in\Gamma$, we let $T^{\gamma}:X\to X$ denote the homeomorphism given by $T^{\gamma}(x)=T(\gamma, x)$, for every $x\in X$. The action is said {\em faithful} when the map $\gamma \mapsto T^{\gamma}$ is injective. We say that the Cantor system is {\em aperiodic} if the action $T$ is free, i.e, $T^{\gamma}(x)=x$ implies $\gamma=1_{\Gamma}$ for any $x\in X$. 
The Cantor system is {\em minimal} if for every $x\in X$, its orbit $o_T(x)=\{T^{\gamma}(x):\gamma\in\Gamma\}$  is dense in $X$. 
  The group generated by a collection of homeomorphisms $\{T_{i}\}_{i \in I}$ is denoted  $\langle T_{i} : i \in I \rangle$ and the one generated by the homeomorphisms of a $T$ action is simply denoted $ \langle T \rangle$.
%



 For a group $G$, we let  $\Aut(G)$  denote the group of automorphisms of $G$.  If $\Gamma$ is another group, $\Gamma\leq G$ means that $\Gamma$ is a subgroup of $G$ or isomorphic to a subgroup of $G$.  Recall that a (semi-)group $\Gamma$ is {\em residually finite} if for any $\gamma \neq \gamma' \in \Gamma$ there exists a homomorphism $\pi$ from $\Gamma$  to a finite (semi-)group $H$ such that $\pi(\gamma) \neq \pi(\gamma')$. A result of Mal'cev ensures that any finitely generated subgroup of $\GL(k ,\CC)$ is residually finite.

 \subsection{Group of automorphisms.}
Let $(X,T,\Gamma)$ be an aperiodic Cantor system.  The {\em normalizer} group of $(X, T, \Gamma)$, denoted $\Norm(T,\Gamma)$, is defined as the subgroup of all self-homeomorphism  $h$ of $X$  such that  $h \langle T \rangle h^{-1} = \langle T\rangle$, or equivalently,  there exists 
$\alpha_{h}\in \Aut(\Gamma)$  such that  $h\circ T^g=T^{\alpha_{h}(g)}\circ h$,  for every  $g\in \Gamma$.
 
By the aperiodicity of the action,  for any element $h \in \Norm(T, \Gamma)$ the  associated  automorphism $\alpha_{h} \in \Aut(\Gamma)$  is unique. Thus we can define
 $$\Aut(T, \Gamma)=\{ h \in \Norm(T, \Gamma):   \alpha_h=id\}.$$

It is direct to check that $\Aut(T, \Gamma)$ is a normal subgroup of $\Norm(T, \Gamma)$ and is the set of automorphisms of $(X,T, \Gamma)$.  More precisely, we have the following lemma. 
 \begin{lemma} \label{lem:AutoPlus}
For an aperiodic minimal Cantor system $(X,T, \Gamma)$ we have the following exact sequence 
 $$
\xymatrix{
\{1\} \ar[r] & \Aut(T, \Gamma)   \ar[r]^-{{\rm Id}} & {\Norm}(T, \Gamma) \ar[r]^-\alpha  & \Aut(\Gamma),
} 
$$
where $\alpha$ is the map 
$h \mapsto \alpha_{h}$.  
\end{lemma}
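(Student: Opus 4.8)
The plan is to show that the map $\alpha\colon \Norm(T,\Gamma)\to\Aut(\Gamma)$, $h\mapsto\alpha_h$, is a well-defined group homomorphism whose kernel is exactly $\Aut(T,\Gamma)$. Once this is established, exactness of the displayed sequence is immediate: the arrow labelled ${\rm Id}$ is the inclusion $\Aut(T,\Gamma)\hookrightarrow\Norm(T,\Gamma)$, hence injective, and its image $\Aut(T,\Gamma)$ coincides with $\ker\alpha$, which is exactness at $\Norm(T,\Gamma)$; normality of $\Aut(T,\Gamma)$ in $\Norm(T,\Gamma)$ follows because a kernel is always normal.

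First I would record that $\Norm(T,\Gamma)$ is indeed a group: if $h\langle T\rangle h^{-1}=\langle T\rangle$, then conjugating by $h^{-1}$ gives $h^{-1}\langle T\rangle h=\langle T\rangle$, so $h^{-1}\in\Norm(T,\Gamma)$, and closure under composition is clear. For $h\in\Norm(T,\Gamma)$ and $g\in\Gamma$ we have $h\,T^{g}h^{-1}\in\langle T\rangle$, and freeness of the action forces the map $g\mapsto T^{g}$ to be injective (if $T^{g_1}=T^{g_2}$ then $T^{g_1g_2^{-1}}$ fixes every point, so $g_1g_2^{-1}=1_\Gamma$); hence there is a \emph{unique} $\alpha_h(g)\in\Gamma$ with $h\,T^{g}h^{-1}=T^{\alpha_h(g)}$, which is the uniqueness already noted in the text. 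That $\alpha_h$ is an endomorphism of $\Gamma$ follows from
\[
T^{\alpha_h(g_1g_2)}=h\,T^{g_1}T^{g_2}h^{-1}=\bigl(h\,T^{g_1}h^{-1}\bigr)\bigl(h\,T^{g_2}h^{-1}\bigr)=T^{\alpha_h(g_1)\alpha_h(g_2)}
\]
together with uniqueness; applying the same to $h^{-1}\in\Norm(T,\Gamma)$ yields $\alpha_{h^{-1}}$, and a one-line computation shows $\alpha_{h^{-1}}$ is a two-sided inverse of $\alpha_h$, so $\alpha_h\in\Aut(\Gamma)$.

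Next I would check that $\alpha$ is a homomorphism. For $h_1,h_2\in\Norm(T,\Gamma)$ and $g\in\Gamma$,
\[
T^{\alpha_{h_1h_2}(g)}=(h_1h_2)\,T^{g}\,(h_1h_2)^{-1}=h_1\bigl(h_2\,T^{g}\,h_2^{-1}\bigr)h_1^{-1}=h_1\,T^{\alpha_{h_2}(g)}\,h_1^{-1}=T^{\alpha_{h_1}(\alpha_{h_2}(g))},
\]
so by uniqueness $\alpha_{h_1h_2}=\alpha_{h_1}\circ\alpha_{h_2}$. Finally, by the very definition $\Aut(T,\Gamma)=\{h\in\Norm(T,\Gamma):\alpha_h={\rm Id}\}=\ker\alpha$, and $h\in\ker\alpha$ means $h\,T^{g}=T^{g}\,h$ for every $g\in\Gamma$, i.e. $h$ commutes with every transformation of the action, which is precisely the condition for $h$ to be an automorphism of $(X,T,\Gamma)$. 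This gives exactness and normality simultaneously.

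There is essentially no hard step here; the only point that must not be skipped is the appeal to freeness (aperiodicity) to guarantee that $g\mapsto T^{g}$ is injective — this is what makes $\alpha_h$ well defined and what upgrades the identities above from statements about homeomorphisms to statements about group elements. Minimality of the system is not needed for this particular lemma.
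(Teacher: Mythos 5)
Your proof is correct and follows exactly the route the paper intends: the paper's own proof is just the one-line remark that aperiodicity makes $\alpha$ a well-defined group morphism, and you have simply written out the straightforward verifications (injectivity of $g\mapsto T^{g}$, $\alpha_h\in\Aut(\Gamma)$, $\alpha_{h_1h_2}=\alpha_{h_1}\circ\alpha_{h_2}$, $\ker\alpha=\Aut(T,\Gamma)$) in full. Your observation that minimality is not needed here is also consistent with the paper, which only invokes aperiodicity.
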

 \begin{proof}
It is enough  and straightforward to check that the  aperiodicity of the action implies that the map $\alpha$ is a group morphism. 
 \end{proof} 
It follows that the quotient group  $\Norm(T, \Gamma)/\Aut(T, \Gamma)$ is isomorphic to a subgroup of $\Aut(\Gamma)$. 
Thus, since $\Aut(\ZZ)$ is isomorphic to $\ZZ/2\ZZ$, for  a minimal Cantor system $(X, T, \ZZ)$, the group $\Aut(T, \ZZ)$ is a subgroup of  $\Norm(T, \ZZ)$ of index at most two.

\begin{lemma}\label{lem:freeness}
Let $(X,T, \Gamma)$ be a minimal system. Then the natural action of $\Aut (T, \Gamma)$ on $X$ is free.
\end{lemma}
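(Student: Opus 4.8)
The plan is to show that the fixed-point set of any automorphism is a closed, $T$-invariant subset of $X$, and then to invoke minimality. So fix $h \in \Aut(T,\Gamma)$ and suppose $h$ has a fixed point; equivalently, the set $F = \{x \in X : h(x) = x\}$ is nonempty. What we must prove is that $h = \id$, which is precisely the assertion that the natural action of $\Aut(T,\Gamma)$ on $X$ is free.

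First I would observe that $F$ is closed: since $X$ is Hausdorff, $F$ is the preimage of the diagonal $\Delta_X \subseteq X \times X$ under the continuous map $x \mapsto (x, h(x))$, hence closed. Second, I would check that $F$ is $T$-invariant. If $x \in F$ and $g \in \Gamma$, then because $h \in \Aut(T,\Gamma)$ commutes with every $T^g$ (i.e. $\alpha_h = \id$), we get $h(T^g x) = T^g(h(x)) = T^g x$, so $T^g x \in F$. Hence $F$ is a nonempty, closed, $T$-invariant subset of $X$.

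Finally, by minimality of $(X,T,\Gamma)$ the only closed $T$-invariant subsets are $\emptyset$ and $X$; since $F \neq \emptyset$, this forces $F = X$, i.e. $h(x) = x$ for every $x \in X$, so $h = \id$. This proves that the action of $\Aut(T,\Gamma)$ on $X$ is free.

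I do not expect any real obstacle here: the statement follows immediately from minimality once one notices that the fixed-point set of a homeomorphism commuting with the action is closed and $T$-invariant. In particular, neither aperiodicity of the action nor the Cantor structure of $X$ (beyond its being Hausdorff) plays any role in the argument.
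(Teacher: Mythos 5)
Your proof is correct and follows exactly the paper's argument: the fixed-point set of an automorphism is closed and $T$-invariant, so by minimality it is empty or all of $X$, forcing any automorphism with a fixed point to be the identity. You merely spell out the details (closedness via the diagonal, invariance via commutation) that the paper leaves implicit.
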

\begin{proof} It is enough to observe that for an element $\phi \in \Aut (T, \Gamma)$, its set of fixed points  is a closed $T$ -invariant subset of $X$. By minimality, if not empty, this set is all $X$ and $\phi$ is the identity.    
\end{proof}
We will use this lemma to identify the automorphisms of a given minimal action. 

\subsection{Generalized subshifts.}\label{generalized-subshift}

We introduce  some notations and  system coming  from symbolic dynamics, we will use several times in this paper. 
 An {\em alphabet} $\A$  is a compact  (not necessarily finite) space with a metric ${\rm dist}_{\A}$.
 A {\em word} of  {\em length} $\ell$ is a  sequence $x_{1}\ldots x_{\ell}$ of $\ell$ letters in $\A$, and the set of such words is denoted $\A^\ell$.   The length of a word $u$ is denoted by $|u|$. 
 Any word can be interpreted as an element of the free monoid $\A^*$ endowed with the operation of concatenation.   For a  word $w =u.v$ that is the concatenation of two words $u$ and $v$, the words $u$ and $v$ are respectively  a {\em prefix} and a {\em suffix} of $w$.   
For an integer $\ell \in \NN$ and a word $u$,   the concatenation of $\ell$ times the word $u$ is denoted $u^{\ell}$ and $u^{\omega}$ is the bi infinite sequence  $(x_{n})_{n \in \ZZ}$ such that $x_{k|u|}\ldots x_{(k+1)|u| -1} = u$ for each integer $k \in \ZZ$. 
The set $\A^\ZZ$ is the collection of two sided infinite sequences $(x_{n})_{n\in \ZZ}$. 
 This last set is a compact  space for the product topology endowed with a metric
 $${\rm Dist}((x_{n})_{n}, (y_{n})_{n}) := \sum_{n \in \ZZ} 2^{-|n|} {\rm dist_{\A}}(x_{n}, y_{n}).$$ Note that if $\A$ is a Cantor set then, $\A^{\mathbb{Z}}$  also is a Cantor set. 

Let $\A^{+}=\bigcup_{n \geq 1} \A^n$ denote the set of all words on $\A$. 
A pseudometric ${\rm dist}_{\A^+}$ on $\A^{+}$  is  defined by ${\rm dist}_{\A^+} (u,v) = \max \{ {\rm dist}_\A (u_i , v_i) : i\leq \min (|u| , |v| ) \}$.

For a sequence ${\bf x}= (x_n)_{n}$, possibly infinite in $\A^{*} \cup \A^{\mathbb{Z}}$, we will use the notation ${\bf x}[i,j]$ to denote the word 
$x_i x_{i+1} \ldots x_{j}$ belonging to $\A^{j-i+1}$. 
We say that the index $i$ is an {\em occurrence} of  the word ${\bf x}[i,j]$  in ${\bf x}$. 
For an integer $\ell>0$, let $\L_{\ell}({\bf x})$ denote the set of words of length $\ell$, $\{{\bf x}[i,i+\ell-1], i \in\ZZ \}$ and  set the {\em language}  of $\bf x$ to  be the collection of words $\L({\bf x}) := \cup_{\ell >0 } \L_{\ell}({\bf x})$.

  We let $\sigma$ denote the {\em shift} map, that is the self-homeomorphism of $\A^\ZZ$ such that 
 $\sigma( (x_{n})_{n\in \ZZ}) = (x_{n+1})_{n\in \ZZ}$.   A {\em generalized subshift} is a topological dynamical system $(X, \sigma)$ where $X$ is a closed $\sigma$-invariant   subset of $\A^\ZZ$.
 The {\em language} of $X$ is the set $\L(X) := \cup_{\ell>0} \L_{\ell}(X)$ where $\L_{\ell}(X) := \cup _{{\bf x}\in X } \L_{\ell}({\bf x)}$ is the set of words of length $\ell$ occurring in some element of $X$.

\section{Realization of countable groups as subgroups of a normalizer}\label{sec:RealCountableGroup}

We  show in this section that any countable group can be realized as a subgroup of the centralizer of a minimal aperiodic system given by the action of a countable free abelian group  on the Cantor set. 

\begin{lemma}\label{cyclic-infinite}Let $\Gamma$ be a countable group. There exist an aperiodic Cantor system $(X,T,\Gamma)$ and  $f\in\Aut(T, \Gamma)$  such that $(X,f)$ is also aperiodic.  \end{lemma}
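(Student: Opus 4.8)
The plan is to obtain $(X,T,\Gamma)$ and $f$ by enlarging the acting group by a central copy of $\ZZ$ and then splitting that copy off. I would consider the countable group $\Gamma\times\ZZ$ and invoke the existence result recalled in the introduction: it admits a free continuous action $S\colon(\Gamma\times\ZZ)\times X\to X$ on a Cantor set $X$ (for instance the minimal one of \cite{HjorthMolberg}); only freeness of $S$ will be used. I would then set $T^{\gamma}:=S^{(\gamma,0)}$ for $\gamma\in\Gamma$ and $f:=S^{(1_{\Gamma},1)}$, so that $T$ is the restriction of $S$ to the subgroup $\Gamma\times\{0\}$ and $f$ is the image of a generator of the $\ZZ$-factor.

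Next I would run the (immediate) verifications. That $(X,T,\Gamma)$ is a Cantor system is clear; it is aperiodic because $T^{\gamma}(x)=x$ gives $S^{(\gamma,0)}(x)=x$, hence $(\gamma,0)=(1_{\Gamma},0)$ by freeness, i.e.\ $\gamma=1_{\Gamma}$. Since $\Gamma\times\ZZ$ is a direct product, $(1_{\Gamma},1)$ commutes with every $(\gamma,0)$, so $f\circ T^{\gamma}=S^{(\gamma,1)}=T^{\gamma}\circ f$ for all $\gamma\in\Gamma$; thus $f$ is a self-homeomorphism of $X$ commuting with $\langle T\rangle$, which by definition places it in $\Aut(T,\Gamma)$ (its associated automorphism of $\Gamma$ being the identity). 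Finally $f^{n}=S^{(1_{\Gamma},n)}$ for every $n\in\ZZ$, so $f^{n}(x)=x$ forces $(1_{\Gamma},n)=(1_{\Gamma},0)$ by freeness, whence $n=0$; therefore $(X,f)$ is aperiodic.

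I do not expect any real obstacle here: the one substantive choice is to apply the existence of a free Cantor action to $\Gamma\times\ZZ$ rather than to $\Gamma$, after which the statement follows by unwinding definitions. I would only remark that the $\Gamma$-subsystem $(X,T,\Gamma)$ need not be minimal---indeed it cannot be when $\Gamma$ is trivial---which is precisely why the lemma asks only for aperiodicity; any free action of $\Gamma\times\ZZ$ on the Cantor set serves equally well in the construction.
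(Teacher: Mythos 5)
Your proposal is correct and is essentially the paper's own argument: the authors likewise apply the existence of a free Cantor action (via \cite{ABT, HjorthMolberg}) to $\Gamma\oplus\ZZ$, set $T^{g}=\phi^{(g,0)}$ and $f=\phi^{(1_{\Gamma},1)}$, and conclude aperiodicity of both $(X,T,\Gamma)$ and $(X,f)$ from freeness, with $f\in\Aut(T,\Gamma)$ because $(1_{\Gamma},1)$ is central. Your closing remark that minimality of the $\Gamma$-subsystem is neither claimed nor needed is accurate.
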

\begin{proof}
Let $\Gamma$ be a countable group. Since $\Gamma\oplus \ZZ$ is still a countable group, there exists an aperiodic Cantor system $(X, \phi, \Gamma\oplus \ZZ)$ (see  \cite{ABT, HjorthMolberg}).

 Let  $f:X\to X$ be the homeomorphism induced by the action of $(1_{\Gamma},1)\in \Gamma\oplus \ZZ$ on $X$, i.e,  $f=\phi^{(1_{\Gamma},1)}$. For every $g\in \Gamma$, let $T^g:X\to X$ be the homeomorphism induced by the action of $(g,0)$ on $X$, i.e, $T^g=\phi^{(g,0)}$. The new Cantor system $(X,T,\Gamma)$ is also aperiodic, and since $(1_{\Gamma},1)$ is in the center of $\Gamma\oplus\ZZ$, we have $f\in\Aut(T,\Gamma)$. Furthermore, since $\phi$ is aperiodic, we get  $f^n(x)=x$ implies $n=0$. 
\end{proof} 
     
\begin{proposition}
Let $G$ be a countable group.  Then there exist a countable subgroup  $\Gamma \leq\bigoplus_{\NN} \ZZ$ (a countable free abelian group)   and an aperiodic minimal Cantor system $(X,S,\Gamma)$, such that  $G\leq  \Norm(S,\Gamma)$.
 \end{proposition}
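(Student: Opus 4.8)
The plan is to realize the full semidirect product $H=\Gamma\rtimes G$ as a group of homeomorphisms of an explicit Cantor group. Take $\Gamma=\bigoplus_{g\in G}\ZZ$, the additive group of the integral group ring $\ZZ[G]$, with $G$ acting on it by the left regular action $(h\cdot\xi)_g=\xi_{h^{-1}g}$; since $G$ is countable, $\Gamma$ is a countable free abelian group, so $\Gamma\leq\bigoplus_{\NN}\ZZ$. Let $\ZZ_2$ be the ring of $2$-adic integers, a Cantor set in which $\ZZ$ sits as a dense subgroup via an injective homomorphism $\iota$. Put $X=\ZZ_2^{\,G}$, which is again a Cantor set because $G$ is countable, and define the $\Gamma$-action $S$ on $X$ by the coordinatewise translation $S^\xi(x)=\bigl(x_g+\iota(\xi_g)\bigr)_{g\in G}$ and the $G$-action $\phi$ on $X$ by the coordinate permutation $\phi^h(x)=(x_{h^{-1}g})_{g\in G}$.

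I would first check that $(X,S,\Gamma)$ is an aperiodic minimal Cantor system. Continuity of $S$ and $\phi$ is immediate and $X$ is a Cantor set. Aperiodicity is clear: $S^\xi$ is translation of the compact group $X$ by the element $(\iota(\xi_g))_g$, which is fixed-point free unless that element is $0$, and injectivity of $\iota$ then forces $\xi=0$. Minimality comes down to the density of $\iota(\ZZ)$ in $\ZZ_2$: the image of $\Gamma$ in $X$ --- finitely supported functions $G\to\iota(\ZZ)$ --- is dense in $\ZZ_2^{\,G}$ since a basic open set constrains only finitely many coordinates; hence every orbit $x+\Gamma$ is dense, so every orbit closure is $X$.

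Next I would verify that $G$ embeds into $\Norm(S,\Gamma)$. The map $h\mapsto\phi^h$ is a homomorphism into the homeomorphism group of $X$, and it is injective because $\ZZ_2$ has more than one element, so distinct $h,h'$ are distinguished by the permutation they induce on coordinates. A direct coordinatewise computation gives $\phi^h\circ S^\xi\circ(\phi^h)^{-1}=S^{\,h\cdot\xi}$ with $h\cdot\xi\in\Gamma$; therefore $\phi^h\langle S\rangle(\phi^h)^{-1}=\langle S\rangle$, i.e. $\phi^h\in\Norm(S,\Gamma)$, with associated automorphism $\alpha_{\phi^h}\in\Aut(\Gamma)$ equal to the coordinate permutation by $h$. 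Thus $G\cong\{\phi^h:h\in G\}\leq\Norm(S,\Gamma)$, which is the statement.

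The point where care is needed --- and what dictates the choices above --- is arranging minimality and aperiodicity of the $\Gamma$-action simultaneously while keeping enough room for a faithful copy of $G$ in the normalizer. For a translation action of $\Gamma$ on a compact group, minimality is equivalent to $\Gamma$ being dense and freeness to $\Gamma$ being injectively embedded, and the adic completion $\ZZ\hookrightarrow\ZZ_2$ is designed to achieve both. Likewise, choosing $\Gamma$ to be the regular $\ZZ[G]$-module is exactly what makes the coordinate shift by $G$ automatically normalize $\langle S\rangle$. A more naive approach --- take any aperiodic minimal action of the countable group $\Gamma\rtimes G$ and restrict it to $\Gamma$ --- need not work, since the restriction of a minimal action to a normal subgroup may fail to be minimal.
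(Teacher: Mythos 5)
Your proof is correct: the translation action of $\Gamma=\ZZ[G]$ on $\ZZ_2^{\,G}$ is free because a translation of a compact group is fixed-point free unless it is by the identity, minimal because the finitely supported $\iota(\ZZ)$-valued functions are dense in the product, and the conjugation identity $\phi^h S^\xi(\phi^h)^{-1}=S^{h\cdot\xi}$ is exactly the normalizer condition, with $h\mapsto\phi^h$ injective. Structurally this is the same skeleton as the paper's argument --- a product of copies of a minimal aperiodic $\ZZ$-system indexed by a $G$-set, with $\Gamma$ the direct sum acting coordinatewise and $G$ entering the normalizer by permuting coordinates --- but your instantiation is genuinely more economical. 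The paper starts from an abstract free action of $G\oplus\ZZ$ (quoting Hjorth--Molberg / Aubrun--Barbieri--Thomass\'e), extracts a minimal component $Y$ of the $\ZZ$-part, and must then index the product by the coset space $G/{\rm Stab}_G(Y)$ and twist the permutation by the original maps $T^g$, since the factors $T^{g_i}(Y)$ are genuinely different sets; minimality of each factor comes from its being a minimal component. You instead take the dyadic odometer in every coordinate and the regular $G$-set as index, so $G$ acts by a pure permutation, minimality reduces to density of $\ZZ$ in $\ZZ_2$, and no existence theorem for free actions is needed. What the paper's version buys in exchange is that its construction is functorial in the given $G$-system (the normalizer elements $\tilde T^g$ carry the original dynamics, and $\Gamma$ can be finitely generated when the stabilizer has finite index), whereas your $\Gamma$ has infinite rank whenever $G$ is infinite; for the statement as written, both are fully adequate.
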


\begin{proof}
Let   $(X, T, G)$ be a Cantor aperiodic system and $f\in\Aut(T,G)$ as in Lemma \ref{cyclic-infinite}.  
Since $(X,f)$ is aperiodic, if $Y\subseteq X$ is a minimal component of $(X,f)$ then  $Y$ is a Cantor set. Observe that $T^g(Y)$ is also a minimal component of $(X,f)$, for every $g\in G$.  Consider the group ${\rm Stab}_G(Y)=\{g\in G: T^g(Y)=Y\}$
 and a collection $\{g_i: i\in I\}$ of elements of $G$ containing one and only one representative element of each class in the set of left cosets $G/{\rm Stab}_G(Y)$. 
We set
$$
\tilde{Y}=\prod_{i\in I}T^{g_i}(Y). 
$$
With the product topology $\tilde{Y}$ is a Cantor set.  

Set $\Gamma$ to be the group $\bigoplus_{I} \ZZ$, namely 
$$
\Gamma=\{(n_i)_{i\in I}\in \ZZ^{I}: n_i=0,  \textrm{ for all but a finite number of index } i\in I\}.
$$
Notice that $\Gamma$ is a countable free abelian group (if ${\rm Stab}_G(Y)$ is of finite index in $G$ then $\Gamma$ is finitely generated).

Given   $n=(n_i)_{i\in I}\in \Gamma$ and $y=(y_i)_{i\in I} \in \tilde{Y}$, we define
$$
S^{n}(y)=(f^{n_i}(y_i))_{i\in I}.
$$
Since each $T^{g_i}(Y)$ is invariant by $f$, we have that $S^n: \tilde{Y}\to \tilde{Y}$ is well defined and  is a homeomorphism. We call $S$ the action of $\Gamma$ on $\tilde{Y}$ induced by the $S^n$ maps.  It is  straightforward to show that  $(\tilde{Y}, S, \Gamma)$ is  aperiodic.  Since every $T^{g_i}(Y)$ is a minimal component of $(X,f)$, the system $(\tilde{Y}, S, \Gamma)$ is also minimal.

For every, $g\in G$ let define $\sigma_g: I\to I$ such that $gg_{\sigma_g(i)}\in g_{i}{\rm Stab}_G(Y)$.
We have that $\sigma_g$ is a permutation. Moreover, $\sigma_g$ induces the isomorphism $\alpha_g:\Gamma\to \Gamma$ given by 
$$
\alpha_g((n_i)_{i\in I})=(n_{\sigma_g(i)})_{i\in I}.
$$
 
For $g\in G$ we set 
$$
\tilde{T}^g(y)=(\tilde{y}_i)_{i\in I}, 
$$
where 
$$
\tilde{y}_i= T^{g}(y_{\sigma_{g}(i)}) \mbox{ for every } i\in I. 
$$
Since $T^{g}(y_{\sigma_{g}(i)})\in T^{g_i}(Y)$, we have that $\tilde{T}^g:\tilde{Y}\to \tilde{Y}$ is well defined.

 We have
\begin{eqnarray*}
\tilde{T}^{g}\circ S^n((y_i)_{i\in I}) & = & \tilde{T}^g((f^{n_i}(y_{i}))_{i\in I})\\
        & = & (T^g(f^{n_{\sigma_g(i)}}(y_{\sigma_g(i)})))_{i\in I} \\
        & = & (f^{n_{\sigma_g(i)}}(T^g(y_{\sigma_g(i)})))_{i\in I}    \\
\end{eqnarray*}

On the other hand,
\begin{eqnarray*}
  S^{\alpha_g(n)}\circ \tilde{T}^g((y_i)_{i\in I}) & = &  S^{\alpha_g(n)}\circ ((T^g( y_{\sigma_g(i)}  ) )_{i\in I} ) \\
          &=&  (f^{n_{\sigma_g(i)}}(T^g( y_{\sigma_g(i)}  ) ))_{i\in I}  \\
\end{eqnarray*}



This shows that $\tilde{T}^g\in \Norm(S,\Gamma)$, and since $g\mapsto \tilde{T}^g$ is an injective homomorphism,  we get $G\leq \Norm(S, \Gamma)$.
\end{proof}

\section{Subgroups of the centralizer of a $\ZZ$ minimal Cantor system}\label{sec:blackbox}

We study the class of  subgroups of automorphisms of a $\ZZ$ Cantor minimal system. We start by showing that this class contains the centralizer of any aperiodic Cantor minimal system of a countable group. From this we deduce this class is stable  by direct product and contains any countable residually finite group.
 
\subsection{Realization of subgroups of the centralizer}\label{sec:realSubGroup}

\begin{proposition}\label{prop:injection}
Let $(X,T, \Gamma)$ be a  Cantor aperiodic  minimal system where $\Gamma$ is a countable group. 
Then there exists a Cantor minimal  system $(Y,S, \ZZ)$  such that  
$\Aut(T,\Gamma) \leq\Aut(S,\ZZ).$ 
\end{proposition}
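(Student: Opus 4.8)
The plan is to encode the $\Gamma$-system $(X,T,\Gamma)$ as a single $\ZZ$-action by a suspension-type construction that keeps track of a ``time'' coordinate, and then recognize $\Aut(T,\Gamma)$ inside the automorphism group of the resulting $\ZZ$-system. More concretely, fix a faithful enumeration $\Gamma=\{g_0=1_\Gamma,g_1,g_2,\dots\}$ and realize $X$ as a generalized subshift: since $(X,T,\Gamma)$ is an aperiodic minimal Cantor system, one can build a point $\bf{x}\in\A^\ZZ$ over a suitable (compact, possibly infinite) alphabet $\A$ so that the orbit closure $Y:=\overline{o_\sigma(\bf x)}$ is a minimal Cantor system and the coordinates of a point record, in a $\ZZ$-indexed fashion, the values $T^{g}(z)$ for $z$ in the orbit and $g$ ranging over $\Gamma$. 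The key design requirement is that the map $z\mapsto$ (its coded sequence) be a topological conjugacy of $(X,T,\Gamma)$ onto a $\Gamma$-subaction sitting inside $(Y,\sigma,\ZZ)$, with the shift $\sigma$ playing the role of ``advance the time index by one''. This is exactly the ``generalized subshift'' language set up in Section \ref{generalized-subshift}, and it is where the compact-but-infinite alphabet is essential: we need a letter for (essentially) each element of $X$ or of $\Gamma$.

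The first concrete step is the coding: I would let the alphabet be (a compact model of) $X$ itself, or $X\times\Gamma$, and send $z\in X$ to the sequence whose $n$-th coordinate encodes $T^{g_n}(z)$ after fixing an enumeration, being careful that the encoding is continuous and injective and that the orbit closure under $\sigma$ is minimal — minimality of $Y$ should follow from minimality of $(X,T,\Gamma)$ together with a syndeticity/recurrence property built into how the $\Gamma$-coordinates are arranged along $\ZZ$. The second step is to check that $\sigma$ restricted to $Y$ is aperiodic, so that Lemma \ref{lem:AutoPlus} and Lemma \ref{lem:freeness} apply to $(Y,\sigma,\ZZ)$. The third, and main, step is to produce the injection $\Aut(T,\Gamma)\hookrightarrow\Aut(\sigma,\ZZ)$: given $\phi\in\Aut(T,\Gamma)$, i.e.\ a homeomorphism of $X$ commuting with every $T^g$, define $\Phi$ on $Y$ by applying $\phi$ letter-by-letter in the coded picture; because $\phi$ commutes with the $\Gamma$-action, $\Phi$ is well defined on the coded copy of $X$, extends continuously to the orbit closure $Y$, commutes with $\sigma$, and the assignment $\phi\mapsto\Phi$ is an injective group homomorphism. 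Injectivity is immediate since $\phi$ is recovered from $\Phi$ by reading off the $g_0=1_\Gamma$ coordinate; and $\Phi\in\Aut(\sigma,\ZZ)$ rather than merely $\Norm(\sigma,\ZZ)$ because the letter-wise action cannot shear the time index.

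The main obstacle I anticipate is the coding itself: arranging the $\Gamma$-coordinates along a single $\ZZ$-axis so that simultaneously (i) the orbit closure is minimal, (ii) the shift action is aperiodic, and (iii) a homeomorphism commuting with all $T^g$ descends to a homeomorphism commuting with $\sigma$. Conditions (i) and (iii) pull in opposite directions — minimality wants the $\ZZ$-orbit to ``see'' all of $\Gamma$ with bounded gaps (a Følner/syndetic arrangement), while (iii) wants the combinatorial positions of the $\Gamma$-coordinates to be rigidly determined by the sequence itself so that an automorphism cannot permute them nontrivially. I would handle this by making the arrangement of $\Gamma$-coordinates along $\ZZ$ follow a fixed, self-similar (e.g.\ Toeplitz- or odometer-like) skeleton that is recognizable from the sequence, so that any $\sigma$-commuting homeomorphism of $Y$ must preserve that skeleton and hence acts coordinate-wise, which pins down the correspondence with $\Aut(T,\Gamma)$. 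Once the coding is fixed with these properties, the verification that $\phi\mapsto\Phi$ is a well-defined injective homomorphism into $\Aut(\sigma,\ZZ)$ is routine bookkeeping.
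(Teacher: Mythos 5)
Your overall strategy---take the alphabet to be $X$ itself, spread the $\Gamma$-orbit of a point along $\ZZ$, and let $\phi\in\Aut(T,\Gamma)$ act letter-by-letter---is the same as the paper's, and the final step (a letter-wise map commutes with $\sigma$; injectivity via reading off a coordinate, or via the fixed-point argument of Lemma \ref{lem:freeness}) is indeed routine. The genuine gap is that the construction you defer to ``a fixed, self-similar (Toeplitz- or odometer-like) skeleton'' is precisely the content of the proof, and you never produce it. The naive coding $z\mapsto (T^{g_n}(z))_n$ for an arbitrary enumeration of $\Gamma$ gives an orbit closure with no reason to be minimal: shifting destroys the alignment with the enumeration, and limit points of shifted codings can generate proper subsystems. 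Arranging the $\Gamma$-translates hierarchically and \emph{proving} minimality is where essentially all the work lies. The paper does this with an $S$-adic sequence of generalized substitutions $\tau_n(x)=T^{s_1}(x)T^{s_2}(x)\cdots T^{s_{d_n}}(x)$ over nested finite symmetric sets $\S_n$ whose union generates $\Gamma$, then proves a primitivity property (Lemma \ref{lem:primitive}, resting on Auslander's syndeticity of return times for minimal actions) and deduces that $X_{\bar\tau}$ is a minimal aperiodic Cantor system (Lemmas \ref{prop:minimality} and \ref{lem:minimality}). Without an analogue of these lemmas your argument is not a proof. A smaller point: your alternative alphabet $X\times\Gamma$ is not compact, since $\Gamma$ is infinite and discrete, so it cannot serve as the alphabet of a generalized subshift without further modification.

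Second, your diagnosis of a tension between minimality and your condition (iii), and the resulting ``recognizability'' requirement on the skeleton, is aimed at the wrong problem. You do not need every $\sigma$-commuting homeomorphism of $Y$ to preserve the skeleton or to act coordinate-wise; you only need the specific letter-wise maps $\bar\phi$ to be automorphisms of $(Y,\sigma)$, i.e.\ to preserve $Y$. If the admissible words are group translates of a single letter with positions independent of that letter (as in the paper, where $\tau_n(\phi(x))=\bar\phi(\tau_n(x))$ holds verbatim), preservation of the subshift is automatic, and commutation with $\sigma$ is automatic for any letter-wise map on $X^{\ZZ}$. So there is no pull in opposite directions: the only real difficulty is minimality (and aperiodicity) of the coded $\ZZ$-system, which is exactly the part your proposal leaves unproved.
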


To prove Proposition \ref{prop:injection}, we first define the Cantor minimal system. 
Using  the notion of generalized subshift  introduced in Section \ref{generalized-subshift},  we will consider a $S$-adic subshift over the infinite alphabet $X$ thanks the notion of generalized substitution  introduced in \cite{DurandOrmesPetite}.  
To show that the generated  system is a Cantor minimal system we will need to use similar results of \cite{DurandOrmesPetite} but adapted in the $S$-adic case. 
The proofs are just small modifications of the original arguments. 
We set them for completeness.

Let $\Gamma$ be a countable group and let  $\{\S_{n}\}_{n}$ be a nested sequence of finite and symmetric sets of elements $\S_{n} = \S_{n}^{-1} =\{s_{1}, \ldots, s_{d_{n}}\}$ such that the union $\bigcup_n\S_n$ generates the group $\Gamma$, where $s_{1}$ is the identity and $d_0\geq 1$. 
When $\Gamma$ is finitely generated, this sequence is stationary. 
Let $(X,T, \Gamma)$ be a  Cantor aperiodic  minimal system. 




Let us recall the definition of  {generalized substitution}   on an alphabet space $X$.
We need to deal with several topological considerations that are trivial in the case where the alphabet is finite. 
For a word $w \in X^*$ and $1 \leq j \leq |w|$, let $\pi_j(w)$ denote the $j$th letter of $w$. 
We say that $\tau:X \to X^*$ is a {\em generalized substitution on $X$} if 
$a \mapsto |\tau(a)|$ is continuous and 
the projection map $\pi_j \circ \tau$ is continuous on the set $\{a \in X : |\tau(a)| \geq j\}$. 
The words $\tau (z)$, $z\in X$, are called $\tau${\em-words}.
A substitution $\tau $ can be extended by concatenation on all the words on $X$ and on all the sequences in $X^{\ZZ}$ by a map still denoted $\tau$. More precisely,  for $x=(x_i)_{i\in\ZZ}\in X^{\ZZ}$,  
$$
\tau(x)=\cdots \tau(x_{-1}). \tau(x_0)\tau(x_1)\cdots.
$$
Consider, for each integer $n \ge 0$  the substitution $\tau_{n}$ on $X$ defined by 
$$\tau_{n} \colon x \mapsto T^{s_{1}}(x)T^{s_{2}}(x) \ldots T^{s_{d_{n}}}(x)$$ where  we recall that $\{s_{1}, \ldots, s_{d_{n}}\} = \S_{n}$. 
When $\Gamma$ is finitely generated, the sequence $(\tau_{n})_{n}$ is stationary. 

Set $ \bar{\tau}$ to be the sequence of substitutions  $(\tau_{n})_{n}$.
We shall consider $\mathcal{L}(\bar{\tau})$ 
the {\em language generated by} $\bar{\tau}$, again a trickier notion to define than in the classical case. 
Fix a letter $a$ in the alphabet space $X$. 
By the {\em language generated by $a$}, 
denoted $\mathcal{L}(\bar{\tau} , a)$, we mean the set of words $w \in X^*$ such that $w$ is a subword of $\tau_{0}\circ \cdots \circ \tau_{j} (a)$ for some $j \in \mathbb{N}$, or $w$ is the limit in $X^n$ of such words. 
We set $\mathcal{L}(\bar{\tau}) = \cup_a \mathcal{L}(\bar{\tau}, a)$.
We also  define  $X_{\bar\tau} \subset X^{\mathbb{Z}}$ to be set of sequences ${\bf x} \in X^{\mathbb{Z}}$ such that
${\bf x}[-n,n] \in \mathcal{L}(\bar{\tau})$ for all $n \geq 0$.
It follows that $X_{\bar\tau}$ is a generalized subshift, i.e., a closed, $\sigma$-invariant subset of $X^{\mathbb{Z}}$.

As in the classical case, our generalized $S$-adic system $\bar \tau$ is \emph{primitive} in the following sense. 

\begin{lemma}\label{lem:primitive}
Given any non-empty open set $V \subset K$, there is an $n \in \mathbb{N}$ such that 
for any letter $a \in X$ and any $k \geq n$ and any $i \ge 0$, one of the letters of $\tau_{i}\circ \cdots \circ \tau_{k+i}(a)$ is in the set $V$.
\end{lemma}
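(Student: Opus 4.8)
The plan is to derive this uniform primitivity property from the minimality of $(X,T,\Gamma)$ and the compactness of $X$ alone, in the spirit of the classical primitivity argument of \cite{DurandOrmesPetite}, using crucially that $s_{1}=1_{\Gamma}$, so that the first letter of $\tau_{n}(x)$ is always $x$ itself.

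First I would identify the letters of an iterated substitution. Since $T$ is an action, $T^{g}\circ T^{h}=T^{gh}$, and a straightforward induction on $k$ shows that for every $a\in X$, every $i\ge 0$ and every $k\ge 0$ the set of letters occurring in $\tau_{i}\circ\cdots\circ\tau_{i+k}(a)$ is exactly
\[
\{\, T^{u_{i}u_{i+1}\cdots u_{i+k}}(a)\ :\ u_{j}\in\S_{j}\ \text{for}\ i\le j\le i+k \,\};
\]
in the inductive step one uses that, in $\tau_{i}\circ(\tau_{i+1}\circ\cdots\circ\tau_{i+k})$, the innermost substitution contributes the rightmost factor of the exponent. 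Because $s_{1}=1_{\Gamma}\in\S_{j}$ for every $j$, one may take $u_{j}=s_{1}$ at all but finitely many coordinates; hence, if $g=t_{1}t_{2}\cdots t_{\ell}$ with each $t_{p}\in\S_{M}$, and if the block $\{i,\dots,i+k\}$ contains at least $\ell$ indices that are $\ge M$, then $T^{g}(a)$ is one of the letters of $\tau_{i}\circ\cdots\circ\tau_{i+k}(a)$.

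Next I would invoke minimality. The family $\{T^{h}(V):h\in\Gamma\}$ is an open cover of $X$, since its union is a nonempty open $T$-invariant subset of $X$ and the system is minimal; by compactness there is a finite $F_{0}\subset\Gamma$ with $X=\bigcup_{h\in F_{0}}T^{h}(V)$. Setting $F=F_{0}^{-1}$, this says precisely that for every $a\in X$ there is $g\in F$ with $T^{g}(a)\in V$. Since $\bigcup_{m}\S_{m}$ generates $\Gamma$ and the $\S_{m}$ form an increasing nested sequence, each $g\in F$ is a product of finitely many elements lying in a common $\S_{m}$; as $F$ is finite, I may fix $M\in\NN$ and $\ell_{0}\in\NN$ such that every $g\in F$ is a product of at most $\ell_{0}$ elements of $\S_{M}$.

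Finally I would set $n:=M+\ell_{0}$ and check it works: given $a\in X$, $k\ge n$ and $i\ge 0$, choose $g\in F$ with $T^{g}(a)\in V$ and write $g=t_{1}\cdots t_{\ell}$ with $\ell\le\ell_{0}$ and $t_{p}\in\S_{M}$; the number of indices in $\{i,\dots,i+k\}$ that are $\ge M$ is at least $k-M+1\ge n-M+1=\ell_{0}+1>\ell$, so by the first paragraph $T^{g}(a)$ occurs as a letter of $\tau_{i}\circ\cdots\circ\tau_{i+k}(a)$, and it lies in $V$. I do not expect any genuine obstacle here: the lemma is a soft consequence of minimality and compactness. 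The only points that need a little care are the bookkeeping for the letters of an iterated generalized substitution — harmless, since the identity-letter trick touches only finitely many coordinates, so the continuity requirements in the definition of a generalized substitution play no role in this argument — and the verification that a single $n$ serves all $i\ge 0$, which works because the block $\{i,\dots,i+k\}$ always extends up to $i+k\ge n$.
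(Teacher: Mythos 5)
Your proof is correct and follows essentially the same route as the paper: a uniform finite set of "return times" to $V$ (which the paper gets by citing Auslander's syndeticity result for minimal actions, and you get by the standard minimality-plus-compactness finite-subcover argument), combined with the observation that, since $1_\Gamma\in\S_j$ and the sets $\S_j$ are nested, every letter $T^{g}(a)$ with $g$ in a fixed finite set occurs in $\tau_{i}\circ\cdots\circ\tau_{k+i}(a)$ once $k$ is large, uniformly in $i\ge 0$. Your explicit bookkeeping with the exponents $u_{i}u_{i+1}\cdots u_{i+k}$, $u_j\in\S_j$, merely spells out what the paper leaves implicit, so there is no substantive difference.
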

\begin{proof}
From a well known result of Auslander, for the minimal $\Gamma$-action there exists a finite set $K \subset \Gamma$ such that  for any $x \in X$, the set of ``return times'' $R_{V}(x) :=\{ g \in \Gamma : T^{g}(x) \in V\}$ is $K$-syndetic, meaning that $\Gamma =  KR_{V}(x) = \{ kg: k \in K,  g \in R_{V}(x)\}$.

Also observe that for any finite set $K \subset \Gamma$, there exists an integer $n$ such that for any $x\in X$, the letters $T^{g}(x)$, $g \in K^{-1}$ occur in $\tau_{0}\circ \cdots \circ \tau_n(x)$. 
Since the sets $\S_{n}$ are nested, each word $\tau_{n}(x)$ is a prefix of $\tau_{m}(x)$ for $m\ge n$, and the letters $T^{g}(x)$, $g \in K^{-1}$ occur in $\tau_{j}\circ \cdots \circ \tau_{n+j}(x)$ for any $j\ge 0$.  
Theses two facts imply the  primitivity of $\bar \tau$. 
\end{proof}

As in the case of primitive generalized substitution \cite[Proposition 19]{DurandOrmesPetite},  the assumption of primitivity  simplifies the definition of the language.
\begin{lemma}\label{lem:langage}
 For any two letters $a, b \in X$, $\mathcal{L}({\bar \tau} , a) = \mathcal{L}({\bar \tau} , b)$. 
\end{lemma}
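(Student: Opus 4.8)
The plan is to exploit primitivity (Lemma \ref{lem:primitive}) to show each of $\mathcal{L}(\bar\tau,a)$ and $\mathcal{L}(\bar\tau,b)$ contains the other, so equality of both with $\mathcal{L}(\bar\tau)$ follows. Fix letters $a,b\in X$. I would first treat the case of a word $w\in\mathcal{L}(\bar\tau,b)$ that is genuinely a subword of some $\tau_0\circ\cdots\circ\tau_j(b)$ (the ``exact'' case), and afterwards deal with limit words by an approximation argument. For the exact case: apply Lemma \ref{lem:primitive} to a small open set $V$ containing $b$; this produces an $n$ such that for every letter $c\in X$ and every $k\ge n$ and every $i\ge0$, the word $\tau_i\circ\cdots\circ\tau_{k+i}(c)$ contains a letter lying in $V$, i.e.\ a letter close to $b$. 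Choosing $k=j+n'$ large enough (and $i=0$) and applying one more block of substitutions $\tau_{j+1}\circ\cdots\circ\tau_{j+?}$, one finds inside $\tau_0\circ\cdots\circ\tau_{M}(a)$, for suitable $M$, an occurrence of $\tau_{j+1}\circ\cdots\circ\tau_{M}(c)$ for a letter $c$ arbitrarily close to $b$; since this image word has $\tau_0\circ\cdots\circ\tau_j$ applied is not quite what we want — more precisely I would arrange that $\tau_0\circ\cdots\circ\tau_{M}(a)$ contains $\tau_0\circ\cdots\circ\tau_{M'}(c)$ as a subword where $c\in V$, and then $\tau_0\circ\cdots\circ\tau_j(c)$ is a prefix of $\tau_0\circ\cdots\circ\tau_{M'}(c)$, hence a subword of $\tau_0\circ\cdots\circ\tau_{M}(a)$. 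By the continuity hypotheses in the definition of a generalized substitution, $\tau_0\circ\cdots\circ\tau_j$ is continuous on an appropriate domain, so as $c\to b$ the word $\tau_0\circ\cdots\circ\tau_j(c)$ converges to $\tau_0\circ\cdots\circ\tau_j(b)\supseteq w$; letting $V$ shrink, we realize $w$ (or a word arbitrarily close to it, of the same length) as a limit of subwords of the words $\tau_0\circ\cdots\circ\tau_M(a)$, hence $w\in\mathcal{L}(\bar\tau,a)$.

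The remaining case, when $w$ is only a limit in $X^{|w|}$ of words $w^{(m)}$ that are subwords of $\tau_0\circ\cdots\circ\tau_{j_m}(b)$, follows by a diagonal argument: each $w^{(m)}$ lies in $\mathcal{L}(\bar\tau,a)$ by the exact case, and $\mathcal{L}(\bar\tau,a)$ is closed in $X^{|w|}$ by construction (it is defined to be closed under such limits), so $w\in\mathcal{L}(\bar\tau,a)$. This gives $\mathcal{L}(\bar\tau,b)\subseteq\mathcal{L}(\bar\tau,a)$, and exchanging the roles of $a$ and $b$ gives the reverse inclusion, proving the lemma.

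I expect the main obstacle to be the careful handling of the continuity/limit bookkeeping in the exact case: one must keep track of the fact that all the relevant words have a fixed finite length $|w|$, that the maps $\pi_\ell\circ(\tau_0\circ\cdots\circ\tau_j)$ are continuous only on the sets where the composed length is at least $\ell$, and that $|\tau_0\circ\cdots\circ\tau_j(c)|$ can be controlled near $c=b$ by continuity of the length map. This is exactly the place where the argument departs from the finite-alphabet case of \cite[Proposition 19]{DurandOrmesPetite}, and it is why the statement quantifies over an open set $V$ rather than a single letter; the rest is a routine transcription of the classical primitive-substitution argument.
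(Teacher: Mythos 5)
Your proposal is correct and follows essentially the paper's own route: use primitivity to find, inside $\tau_{j+1}\circ\cdots\circ\tau_{M}(a)$, a letter $c$ in a small neighborhood of $b$, then use continuity of the action (hence of the finitely many coordinates of $\tau_0\circ\cdots\circ\tau_j$) to conclude that $\tau_0\circ\cdots\circ\tau_j(c)$, a subword of $\tau_0\circ\cdots\circ\tau_M(a)$, approximates $\tau_0\circ\cdots\circ\tau_j(b)\supseteq w$, so $w\in\mathcal{L}(\bar\tau,a)$, and symmetrically. The only difference is bookkeeping: the paper handles exact subwords and their limits in one stroke via an $\epsilon/2$-approximation, whereas you split off the limit case using closedness of $\mathcal{L}(\bar\tau,a)\cap X^{|w|}$, which is equivalent.
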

This enable us to denote  $\mathcal{L}({\bar \tau} ):= \mathcal{L}({\bar \tau} , a)$ independently of the letter $a \in X$. 

\begin{proof}
Let $w \in \mathcal{L}( {\bar \tau}, b)$ and suppose $\epsilon>0$ is given. Then, there are integers $k,i,j \geq 0$ such that
the distance between $\tau_{0}\circ \cdots \circ \tau_k(b)[i,j]$ and $w$ is less than $\epsilon/2$ in the $X^{|w|}$-metric.
By the continuity  of the action $T$, there is a $\delta>0$ such that 
${\rm dist}_{X}(b,b')<\delta$ implies the distance from $\tau_{0}\circ \cdots \circ \tau_k(b)[i,j]$ and $\tau_{0}\circ \cdots \circ \tau_k(b')[i,j]$ is less than $\epsilon/2$.

By the primitivity of $\bar \tau$, there is an integer  $n$ and a $b'$ at distance smaller than $\delta$ of $b$  such that $b'$ occurs in $\tau_{k+1}\circ \cdots \circ \tau_{k+n+1}(a)$. 
It follows that there are integers $i', j' \geq 0$ such that 
the distance from $\tau_{0}\circ \cdots \circ \tau_{k+n+1}(a)[i',j']$ to $w$ is less than $\epsilon$. 
Thus, $w $ belongs to $ \mathcal{L}({\bar \tau} , a)$ and
$\mathcal{L}({\bar \tau} , b)$ is included in  $\mathcal{L}({\bar \tau} , a)$. 

Similarly, $\mathcal{L}({\bar \tau} , a) \subset \mathcal{L}({\bar \tau} , b)$. 
\end{proof}


Th next lemma shows the subshift  $X_{\bar\tau}$ is not empty and provides a dense orbit. 
\begin{lemma}
\label{prop:minimality}
Let $a \in X$ and let ${\bf x}$ be any point in $X^{\mathbb{Z}}$ with $x_{-1}=T^{s_{1}}(a) \in X$ and $x_0=T^{s_{2}}(a)\in X$.
Then,  for any accumulation point ${\bf z}$ of  the sequence $(\tau_{0}\circ\cdots\circ\tau_{n} ({\bf x}))_{n}$, $X_{\bar\tau}$ is the closure of the $\sigma$-orbit of ${\bf z}$. 
\end{lemma}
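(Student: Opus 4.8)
The plan is to prove the two inclusions separately, using the primitivity lemma (Lemma \ref{lem:primitive}) as the main tool, together with the characterization of $\mathcal{L}(\bar\tau)$ afforded by Lemma \ref{lem:langage}. First, note that the hypotheses on $\mathbf{x}$ guarantee that $x_{-1}x_0 = \tau_n(a)[1,2] \in \mathcal{L}(\bar\tau)$ for every $n$, hence more generally that the central word $\tau_0\circ\cdots\circ\tau_n(\mathbf{x})[-m,m]$ is a subword of $\tau_0\circ\cdots\circ\tau_n(x_{-1}x_0)$, which in turn (since $x_{-1}x_0$ is a subword of $\tau_{n+1}(a) = \tau_{n+1}(a)$, here using that $s_1$ is the identity so $x_{-1}=a$ in fact equals $T^{s_1}(a)=a$, and $x_0 = T^{s_2}(a)$) is a subword of $\tau_0\circ\cdots\circ\tau_{n+1}(a)$, hence lies in $\mathcal{L}(\bar\tau)$. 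Therefore every accumulation point $\mathbf{z}$ of $(\tau_0\circ\cdots\circ\tau_n(\mathbf{x}))_n$ has all its central factors in the closed set $\mathcal{L}(\bar\tau)$ (recall $\mathcal{L}(\bar\tau)$ was defined to be closed under limits in $X^\ell$), so $\mathbf{z}\in X_{\bar\tau}$; since $X_{\bar\tau}$ is closed and $\sigma$-invariant, the closure of the $\sigma$-orbit of $\mathbf{z}$ is contained in $X_{\bar\tau}$. This also shows $X_{\bar\tau}\neq\emptyset$.

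For the reverse inclusion, I would show that the $\sigma$-orbit closure of $\mathbf{z}$ contains every word of $\mathcal{L}(\bar\tau)$ as a factor, i.e. that $\mathbf{z}$ is a ``uniformly recurrent''-type point whose language is all of $\mathcal{L}(\bar\tau)$; then any $\mathbf{y}\in X_{\bar\tau}$, having all central factors in $\mathcal{L}(\bar\tau)=\mathcal{L}(\mathbf{z})$, is approximated by shifts of $\mathbf{z}$ and hence lies in the orbit closure. To control the language of $\mathbf{z}$: fix $w\in\mathcal{L}(\bar\tau)$ and $\epsilon>0$; by Lemma \ref{lem:langage} we may take $w$ to be (approximately, within $\epsilon/2$) a factor of $\tau_0\circ\cdots\circ\tau_k(a)$ for a single fixed letter $a$. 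By continuity of the compositions $\tau_0\circ\cdots\circ\tau_k$ (which follows from the continuity conditions in the definition of generalized substitution, exactly as in the proof of Lemma \ref{lem:langage}), there is $\delta>0$ so that $\mathrm{dist}_X(a,a')<\delta$ makes $\tau_0\circ\cdots\circ\tau_k(a')$ and $\tau_0\circ\cdots\circ\tau_k(a)$ agree letterwise within $\epsilon/2$ on the relevant coordinate range. Now apply Lemma \ref{lem:primitive} to the open set $V = B(a,\delta)$: there is $n$ such that for all $i\ge 0$ and all letters $c\in X$, some letter of $\tau_i\circ\cdots\circ\tau_{n+i}(c)$ lies in $V$. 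Applying this with $c = $ one of the letters appearing in a word of the form $\tau_0\circ\cdots\circ\tau_m(\mathbf{x})$ for $m$ large, we find that $\tau_0\circ\cdots\circ\tau_{m}(\mathbf{x})$ contains a factor which is within $\epsilon$ of $w$, and with bounded gaps between consecutive such occurrences (the gap bound coming from the uniform $n$ in Lemma \ref{lem:primitive} together with the bounded lengths $|\tau_j(c)|$ — here one uses that $a\mapsto|\tau_j(a)|$ is continuous on the compact space $X$, hence bounded). Passing to the accumulation point $\mathbf{z}$, one concludes that $w$ occurs in $\mathbf{z}$ with bounded gaps, in particular $w\in\mathcal{L}(\mathbf{z})$ and $\mathbf{z}$ is uniformly recurrent.

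Having established both that $\mathbf{z}\in X_{\bar\tau}$ and that $\mathcal{L}(\mathbf{z}) = \mathcal{L}(\bar\tau)$ with $\mathbf{z}$ uniformly recurrent, the conclusion follows: the orbit closure $\overline{o_\sigma(\mathbf{z})}$ is contained in $X_{\bar\tau}$ by the first paragraph, and conversely any $\mathbf{y}\in X_{\bar\tau}$ has, for each $m$, its central factor $\mathbf{y}[-m,m]\in\mathcal{L}(\bar\tau)=\mathcal{L}(\mathbf{z})$, so $\mathbf{y}[-m,m]$ occurs in $\mathbf{z}$, say at position $p_m$; then $\sigma^{p_m+?}(\mathbf{z})$ converges to $\mathbf{y}$ (after the obvious recentering), giving $\mathbf{y}\in\overline{o_\sigma(\mathbf{z})}$. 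Hence $X_{\bar\tau} = \overline{o_\sigma(\mathbf{z})}$.

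The step I expect to be the main obstacle is the bounded-gap argument for occurrences of (approximations of) $w$ in $\mathbf{z}$ — that is, correctly combining the uniform integer $n$ from Lemma \ref{lem:primitive} with the boundedness of the lengths $|\tau_j(c)|$ over $c\in X$ to get a genuine syndeticity statement, while being careful that $w$ itself is only being matched up to $\epsilon$ and that these $\epsilon$-approximations survive the passage to the accumulation point $\mathbf{z}$. The topological bookkeeping (choosing $\delta$, tracking which coordinate ranges are affected, ensuring the limiting word in $X^{|w|}$ is exactly $w$) is routine but needs to be done with some care, mirroring closely the arguments in \cite{DurandOrmesPetite}; everything else is a direct adaptation of the classical primitive-substitution minimality proof.
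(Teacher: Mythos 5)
Your proposal is correct and follows essentially the same route as the paper's proof: the first inclusion via the observation that $x_{-1}x_0$ is a prefix of $\tau_{n+1}(a)$ so every central factor of $\mathbf{z}$ lies in $\mathcal{L}(\bar\tau)$, and the reverse inclusion via Lemma \ref{lem:langage}, continuity of $\tau_{0}\circ\cdots\circ\tau_{k}$ at a letter, primitivity (Lemma \ref{lem:primitive}), and the constant length of the $\tau_{0}\circ\cdots\circ\tau_{k+\ell}$-blocks appearing in $\mathbf{z}$. The only point to tighten is that with an infinite alphabet you should not assert the exact equality $\mathcal{L}(\mathbf{z})=\mathcal{L}(\bar\tau)$ of occurring words (so ``$\mathbf{y}[-m,m]$ occurs in $\mathbf{z}$ at position $p_m$'' is only true up to $\epsilon$), but since the orbit-closure conclusion needs only such $\epsilon$-approximate occurrences — exactly the bookkeeping you flag and the paper carries out with its $\epsilon'$'s — this is a matter of phrasing, not a gap.
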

\begin{proof}
Suppose  ${\bf z}$ is the limit of  sequences $\tau_{0}\circ \cdots \circ \tau_{k_i} ({\bf x})$ for some increasing integer sequence  $(k_{i})_{i}$. 
Fix $m \in \mathbb{N}$ and let $\epsilon >0$ be given. Then, there is an $i \geq 1$ such that 
the word ${\bf  z}[-m,m]$ is within $\epsilon$ of a subword of $\tau_{0}\circ \cdots \circ \tau_{k_i}(x_{-1}x_{0})$.
Since $x_{-1}x_{0}$ is a prefix of $\tau_{k_{i}+1}(a)$,   ${\bf z}[-m,m]$ is 
within $\epsilon$ of a subword of $\tau_{0}\circ \cdots \circ \tau_{k_i+1}(a)$. 
Therefore, ${\bf z}[-m,m] \in \mathcal{L}({\bar \tau} , a) \subset \mathcal{L}({\bar \tau})$. 
This shows that ${\bf z}$ belongs to $X_{\bar \tau}$.
But because $X_{\bar \tau}$ is closed and shift-invariant, 
the orbit closure $\overline{\{\sigma^n ({\bf  z}) : n\in \mathbb{Z} \}}$ is included in  $X_{\bar \tau}$.

Conversely, fix an ${\bf y}\in X_{\bar\tau}$ and an  $\epsilon'>0$.  Let us fix an integer  $n \ge 0$, such that   ${\rm Dist}({\bf y},{\bf y'})< \epsilon'$ when  ${\rm dist}_{X^{+}}( {\bf y}[-n,n], {\bf y'}[-n,n])$ is less than $\epsilon'/2$. 
Since ${\bf y}[-n,n] \in \mathcal{L}({\bar \tau}) =\mathcal{L}({\bar \tau}, x_{0}) $, there are integers $k,i$  such that 
$\dist_{X^{+}}(\tau_{0}\circ \cdots \circ \tau_k (x_{0})[i-n,i+n],{\bf y}[-n,n]) $ is less than $\epsilon'/4$. 

By continuity, there exists a neighborhood $U$ of $x_{0}$  such that  the words $\tau_{0}\circ \cdots \circ \tau_k(x_{0})$ and $\tau_{0}\circ \cdots \circ \tau_k(x')$ are within $\epsilon'/4$ whenever $x'\in U$.  

The primitivity of $\bar\tau$ implies there is a $\ell $ such that  for any  letter $b\in X$,  a point in $U$  occurs in the word $\tau_{k+1}\circ \cdots \circ \tau_{k+\ell}(b)$. 
Set $R= | \tau_{0}\circ \cdots \circ \tau_{k+\ell}(b)|$ for some (all) $b \in X$.
Using the compacity of the space of letters $X$,  it is standard to check the word ${\bf z}[ R-1, 2R-1]$ is of the form $\tau_{0}\circ \cdots \circ \tau_{k+\ell}(b)$ for some $b \in X$.
Finally,  we get the word ${\bf y}[-n,n]$ is within $\epsilon'/2$ of a subword of ${\bf z}[ R-1, 2R-1]$.
Since $\epsilon'$ is  arbitrary,  it follows that ${\bf y} \in \overline{ \{ S^n ({\bf z}) : n \in \mathbb{Z} \} }$.
\end{proof}

\begin{lemma}
\label{lem:minimality}
The system $(X_{\bar\tau}, \sigma)$ is a minimal Cantor system.
\end{lemma}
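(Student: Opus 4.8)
The statement to prove is that $(X_{\bar\tau},\sigma)$ is a minimal Cantor system. The proof naturally splits into three parts: nonemptiness, minimality, and the Cantor property (compact, metrizable, perfect, totally disconnected).

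For nonemptiness and minimality, the plan is to invoke Lemma \ref{prop:minimality} directly. Fix a letter $a\in X$ and a point ${\bf x}\in X^\ZZ$ with $x_{-1}=T^{s_1}(a)$ and $x_0=T^{s_2}(a)$. The sequence $(\tau_0\circ\cdots\circ\tau_n({\bf x}))_n$ lives in the compact space $X^\ZZ$ (compact because $X$ is compact and we use the product topology with the metric ${\rm Dist}$), hence has an accumulation point ${\bf z}$. By Lemma \ref{prop:minimality}, $X_{\bar\tau}=\overline{\{\sigma^n({\bf z}):n\in\ZZ\}}$; in particular $X_{\bar\tau}$ is nonempty. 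For minimality I would argue as follows: given any ${\bf y}\in X_{\bar\tau}$, the exact same argument of Lemma \ref{prop:minimality} (the ``conversely'' direction) applied with ${\bf y}$ in place of the given point shows that every point of $X_{\bar\tau}$ belongs to the orbit closure of any point whose ``central block source'' ${\bf z}$ is an accumulation point of iterated substitution images. More cleanly: one shows that for \emph{every} ${\bf w}\in X_{\bar\tau}$ and every ${\bf y}\in X_{\bar\tau}$, ${\bf y}\in\overline{o_\sigma({\bf w})}$, because every finite word ${\bf w}[-R,R]$ occurring in an element of $X_{\bar\tau}$ is, by primitivity (Lemma \ref{lem:primitive}) and the structure of $\mathcal L(\bar\tau)$, approximately a subword of every sufficiently long $\tau_0\circ\cdots\circ\tau_k(b)$, hence approximately occurs in ${\bf w}$ itself with bounded gaps. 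This gives that every orbit closure is all of $X_{\bar\tau}$, i.e. minimality.

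For the Cantor property: $X_{\bar\tau}$ is a closed subset of $X^\ZZ$, which is compact metrizable, hence $X_{\bar\tau}$ is compact metrizable. It is totally disconnected because $X$ is a Cantor set, so $X^\ZZ$ is totally disconnected (a product of totally disconnected spaces), and any subspace of a totally disconnected space is totally disconnected. The only remaining point is that $X_{\bar\tau}$ has no isolated points. Here I would use minimality together with the fact that $X_{\bar\tau}$ is infinite: a minimal system on a compact metric space is either finite (a single periodic orbit) or perfect. To rule out the finite case, observe that $X$ is infinite (a Cantor set) and the letters appearing in elements of $X_{\bar\tau}$ form a dense subset of $X$ — indeed by primitivity (Lemma \ref{lem:primitive}), for every nonempty open $V\subset X$ some letter of some $\tau_i\circ\cdots\circ\tau_k(a)$ lies in $V$, and such letters occur in elements of $X_{\bar\tau}$ — so $X_{\bar\tau}$ contains sequences using infinitely many distinct letters and cannot be a single finite orbit. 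Hence $X_{\bar\tau}$ is perfect.

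The main obstacle is the minimality argument: one must be careful that the ``conversely'' part of Lemma \ref{prop:minimality} was stated for a \emph{specific} ${\bf z}$ arising as an accumulation point, so to conclude genuine minimality (every orbit is dense) I need to either note that the argument there in fact shows $\overline{o_\sigma({\bf y})}\supseteq X_{\bar\tau}$ for every ${\bf y}\in X_{\bar\tau}$ — which it does, since the only property of ${\bf z}$ used in that direction is that arbitrarily long $\tau$-images $\tau_0\circ\cdots\circ\tau_{k+\ell}(b)$ occur in it, and this holds for every element of $X_{\bar\tau}$ by definition of the language and primitivity — or reprove this self-contained. The Cantor part is routine given the earlier setup; the perfectness is the only subtlety there, handled by the density-of-letters observation above.
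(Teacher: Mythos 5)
Your route for minimality is genuinely different from the paper's, and the one step you yourself flag as the main obstacle is exactly where your justification is too thin. The paper never argues that every orbit is dense. It works only with the distinguished point ${\bf z}$: Lemma \ref{prop:minimality} gives that the orbit closure of ${\bf z}$ is all of $X_{\bar\tau}$, and the proof of Lemma \ref{lem:minimality} then shows that ${\bf z}$ returns to every neighbourhood of itself with syndetic gaps (the classical almost-periodicity criterion). The syndeticity comes for free from the construction of ${\bf z}$: writing ${\bf z}=\tau_{0}\circ\cdots\circ\tau_{m+k}({\bf y})$, where ${\bf y}$ is an accumulation point of the deeper substitution images of ${\bf x}$, exhibits ${\bf z}$ as an exact concatenation of level-$(m+k)$ blocks of one common length $R$, and continuity plus primitivity guarantee that each such block contains a word $\epsilon$-close to ${\bf z}[-n,n]$. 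Your argument instead needs the structural claim that an \emph{arbitrary} ${\bf w}\in X_{\bar\tau}$ contains, for every $k$, an exact occurrence of a block $\tau_{0}\circ\cdots\circ\tau_{k}(b)$, and you justify this by ``definition of the language and primitivity''. That does not suffice as stated: $\mathcal{L}(\bar\tau)$ is defined as a limit-closure of subwords of the words $\tau_{0}\circ\cdots\circ\tau_{j}(a)$, so a subword of ${\bf w}$ is a priori only \emph{approximated} by words sitting inside substitution images, and primitivity says nothing about exact block occurrences; the same unproved jump hides in your ``hence approximately occurs in ${\bf w}$ itself with bounded gaps''.

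The claim you need is true, but it requires an argument of its own: a word of $\mathcal{L}(\bar\tau)$ of length $2R$, with $R$ the common length of the level-$k$ blocks, is a limit of subwords of words $\tau_{0}\circ\cdots\circ\tau_{j_m}(a)$ with $j_m>k$; each of these contains a complete level-$k$ block at one of the finitely many offsets $0,\ldots,R-1$; passing to a subsequence along which the offset is constant and the seed letters $b_m$ converge in the compact space $X$, the continuity of $x\mapsto\tau_{0}\circ\cdots\circ\tau_{k}(x)$ yields an exact block $\tau_{0}\circ\cdots\circ\tau_{k}(b)$ in the limit word. With this supplied, your ``every orbit is dense'' proof of minimality is correct and even a bit more self-contained than the paper's (no appeal to the almost-periodic-point criterion); without it, the minimality step is unsupported. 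The rest of your proposal (nonemptiness via an accumulation point, closedness and total disconnectedness inside $X^{\ZZ}$, and perfectness from minimality together with the density of the letters occurring in ${\bf z}$ in the infinite space $X$) matches the paper, which likewise deduces aperiodicity, hence perfectness, from the fact that $\mathcal{L}({\bf z})$ contains infinitely many letters.
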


\begin{proof}
Let ${\bf x}\in X^\mathbb{Z}$ where $x_{-1}x_0$ is a prefix of $\tau_{0}(a)$ for a fixed $a \in X$. 
Let ${\bf z}$ be an  accumulation point of  the sequence $(\tau_{0}\circ\cdots\circ\tau_{n} ({\bf x}))_{n}$.
Thus, from Lemma \ref{prop:minimality}, the $\sigma$-orbit closure of ${\bf z}$ is $X_{\bar\tau}$.

To show the minimality it is enough to prove that for every open set $U$ in $X_{\bar\tau}$ containing ${\bf z}$,
there is an $R>0$ such that for any $j \in \mathbb{Z}$, there is a $0 \leq i < R$ such that $\sigma^{j+i}({\bf z}) \in U$. 
Notice that there are  $n \in \mathbb{N}$ and  $\epsilon>0$ such that $\sigma^i({\bf z})$  is in $U$ whenever
the distance from ${\bf z}[-n,n]$ to ${\bf z}[i-n,i+n]$ is less than $\epsilon$. 

Moreover, from the continuity of the action $T$ and the very definition of ${\bf z}$, there are a $\delta>0$ and a $k>0$ such that ${\rm dist}_X(a,a')<\delta$ implies that a word 
occurs in $\tau_{0}\circ \cdots \circ \tau_k(a')$ which is within $\epsilon$ of ${\bf z}[-n,n]$. 

By the primitivity of $\bar\tau$, there is an $m$ such that for any $b \in X$, 
$\tau_{k+1}\circ \cdots \circ \tau_{m+k}(b)$ contains a letter within $\delta$ of $a$. Accordingly, any  $\tau_{0}\circ \cdots \circ \tau_{m+k}$ word
$\tau_{0}\circ \cdots \circ \tau_{m+k}(b)$ contains a word within $\epsilon$ of ${\bf z}[-n,n]$.

Now observe that ${\bf z} = \lim_{i \to \infty} \tau_{0}\circ \cdots \circ \tau_{k_i} ({\bf x})$. 
Let $\bf y$ be an accumulation point of the sequence $( \tau_{m+k+1}\circ \cdots \circ \tau_{k_i} ({\bf x}))_{i}$. 
The continuity of the substitutions implies  that ${\bf z} = \tau_{0}\circ \cdots \circ \tau_{m+k} ({\bf y})$ so that ${\bf z}$ is a concatenation of  $\tau_{0}\circ \cdots \circ \tau_{m+k}$-words.   
Since all $\tau_{0}\circ \cdots \circ \tau_{m+k}$-words have the same length, say $R$ , it follows from the  first part of the proof that
for any $j \in \mathbb{Z}$, there is a $0 \leq i < R$ such that $\sigma^{j+i}({\bf z}) \in U$.

To show the aperiodicity of the system,  notice that the language  ${\mathcal L}({\bf z}) \subset  \mathcal{L}(\tau)$ contains infinitely many letters. 
This together with the minimality of the system imply the aperiodicity.
\end{proof}


\begin{proof}[Proof of Proposition \ref{prop:injection}]
Let $(X_{\bar\tau},\sigma)$ be the generalized subshift defined above.  
Let $\phi$ be in ${\Aut}(T,\Gamma)$, that is  $\phi \colon X \to X$ continuous and $\phi\circ T^{\gamma}(x) = T^{\gamma}\circ\phi(x)$ for all $x\in X$, $\gamma \in \Gamma$. We can associate to $\phi$ a transformation $\bar{\phi}$ on $X^{n}$, by coordinate wise composition 
\begin{align}\label{eq:barphi1}
 \bar{\phi} ((x_{i})_{i}) := (\phi(x_{i}))_{i}.
 \end{align}
 
This defines by concatenation a continuous bijective map on the whole space $X^{\ZZ}$,  still denoted $\bar{\phi}$, that commutes with the shift. 
Observe moreover that, for every $x\in X$, and every integer $n$ 
\begin{align}\label{eq:barphi2}
\tau_{n}(\phi(x)) = \bar{\phi}(\tau_{n}(x)).
\end{align}
It follows that the map $\bar{\phi}$ preserves the subshift $X_{\bar\tau}$.   
It is then straightforward to check, with Lemma \ref{lem:freeness}, that  the map $\phi \in {\Aut}(T,\Gamma) \mapsto \bar{\phi} \in {\Aut}(\sigma,\ZZ)$  is an injective homomorphism.
\end{proof}

{\rm A first consequence of Proposition \ref{prop:injection} is that the automorphisms group of a Cantor minimal $\ZZ$ system may be uncountable.
}

\begin{coro}\label{cor:realisationProFiniteGroup}
Let $\bf G$ be a topological group homeomorphic to a Cantor set.
Then there exists a Cantor minimal $\ZZ$ system $(X,S, \ZZ)$ such that ${\bf G} \leq\Aut (S,\ZZ)$. 
\end{coro}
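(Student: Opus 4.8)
The plan is to realize $\mathbf{G}$ as a subgroup of $\Aut(T,\Gamma)$ for a suitably chosen aperiodic minimal Cantor system, and then apply Proposition \ref{prop:injection} to push it into the centralizer of a $\ZZ$-system. The natural candidate for the intermediate system is the action of $\mathbf{G}$ on \emph{itself} by left translation: take $X = \mathbf{G}$ (a Cantor set by hypothesis), let $\Gamma$ be a countable dense subgroup of $\mathbf{G}$ (which exists since $\mathbf{G}$, being a Cantor set, is separable, and one can extract a countable subgroup whose closure is all of $\mathbf{G}$), and let $T^\gamma$ be left multiplication by $\gamma$. Then $(X,T,\Gamma)$ is a Cantor system, it is minimal because $\Gamma$ is dense, and it is aperiodic because left translation by a nontrivial group element has no fixed point. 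Now for each $g\in\mathbf{G}$, \emph{right} multiplication $R_g\colon x\mapsto xg$ is a self-homeomorphism of $X$ that commutes with every left translation, so $R_g\in\Aut(T,\Gamma)$; moreover $g\mapsto R_g$ is an injective homomorphism (injective since $R_g(1_{\mathbf G})=g$). Hence $\mathbf{G}\leq\Aut(T,\Gamma)$.

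The key steps, in order, are: (1) produce a countable dense subgroup $\Gamma\leq\mathbf{G}$ and verify that left translation gives an aperiodic minimal Cantor system $(X,T,\Gamma)$ with $X=\mathbf{G}$; (2) check that right translations land in $\Aut(T,\Gamma)$ and that $g\mapsto R_g$ embeds $\mathbf{G}$; (3) invoke Proposition \ref{prop:injection} applied to $(X,T,\Gamma)$ to obtain a Cantor minimal $\ZZ$-system $(Y,S,\ZZ)$ with $\Aut(T,\Gamma)\leq\Aut(S,\ZZ)$; (4) conclude $\mathbf{G}\leq\Aut(T,\Gamma)\leq\Aut(S,\ZZ)$.

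I expect the only genuinely delicate point to be step (1): ensuring that a topological group homeomorphic to the Cantor set admits a countable \emph{dense} subgroup. Separability of $\mathbf{G}$ is immediate from it being a compact metrizable (Cantor) space, so there is a countable dense subset $D$; the subgroup $\Gamma := \langle D\rangle$ it generates is still countable and still dense, which is exactly what is needed for minimality. (One should note $\Gamma$ need not be free abelian or have any special structure — Proposition \ref{prop:injection} only requires $\Gamma$ countable, which holds.) Everything else — the commutation of left and right translations, freeness of left translation, continuity of $R_g$ — is routine group-topology bookkeeping. If one instead prefers to avoid the self-translation construction, an alternative is to start from any faithful aperiodic minimal action of a countable dense $\Gamma\leq\mathbf G$ produced as in \cite{HjorthMolberg}, but the left/right translation picture is the cleanest and makes the embedding $g\mapsto R_g$ transparent.
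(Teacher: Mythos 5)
Your proposal is correct and follows essentially the same route as the paper: left translations of a countable dense subgroup $\Gamma\leq\mathbf G$ give the aperiodic minimal system, right translations of $\mathbf G$ embed into $\Aut(T,\Gamma)$, and Proposition \ref{prop:injection} finishes the argument. (The paper additionally notes, via Lemma \ref{lem:freeness} and transitivity of the right action, that $\Aut(T,\Gamma)$ is in fact isomorphic to $\mathbf G$, but your direct injectivity check suffices for the stated inclusion.)
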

\begin{proof}
The group ${\bf G}$ is separable so the (countable) group $\Gamma \le {\bf G}$ generated by a dense countable subset is dense in $\bf G$.
For every $\gamma\in \Gamma$, and $g\in {\bf G}$, we set $T^{\gamma}(g)=\gamma g$. Thus $({\bf G}, T, \Gamma)$ is the minimal aperiodic Cantor system induced by the left translations of $\Gamma$ on ${\bf G}$.
The group $\bf G$ acts  by right translations  on itself.
This action is transitive and commutes with the one of $\Gamma$ so by Lemma \ref{lem:freeness}, $\Aut(T,\Gamma)$ is isomorphic to ${\bf G}$.
Finally Proposition \ref{prop:injection} provides the result.  
\end{proof}

If $\Gamma$ is a finitely generated residually finite group, it is  isomorphic to a dense subgroup of any  $\Gamma$-odometers (see \cite{CP08}), which is a topological group homeomorphic to the Cantor set.  The application of Corollary  \ref{cor:realisationProFiniteGroup}  to a $\Gamma$-odometer gives a $\ZZ$ Cantor minimal system with $\Gamma$ a subgroup of its centralizer. 
Actually the same result is true for countable residually finite group that might be  infinitely generated and with non-equicontinuous action. 
We provide  here a direct proof based on  a Lindenstrauss-Weiss idea \cite[Proposition 3.5]{LW}. 

\begin{proposition}\label{cor:realizationResiduallyFiniteGroup}
Let $\Gamma$ be a  countable residually finite group. Then there exists a Cantor minimal  system $(X,S, \ZZ)$ such that $\Gamma \leq\Aut(S,\ZZ)$.
\end{proposition}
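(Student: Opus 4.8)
The plan is to first produce an aperiodic minimal Cantor system $(X,T,\Gamma)$ whose centralizer already contains a copy of $\Gamma$, and then transfer this into a $\ZZ$-system by Proposition~\ref{prop:injection} (equivalently, by Corollary~\ref{cor:realisationProFiniteGroup}).

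I would begin with two reductions. First, a finite group embeds in $\Gamma\times\ZZ$, which is again countable and residually finite, so it suffices to treat the case where $\Gamma$ is infinite. Second, using residual finiteness together with countability, enumerate $\Gamma\setminus\{1_\Gamma\}=\{g_1,g_2,\dots\}$, choose for each $k$ a finite index subgroup missing $g_k$, replace it by its (finite index) normal core, and intersect the first $n$ of these to get a nested sequence $\Gamma=\Gamma_0\supseteq\Gamma_1\supseteq\cdots$ of finite index normal subgroups with $\bigcap_n\Gamma_n=\{1_\Gamma\}$; since $\Gamma$ is infinite, infinitely many of the inclusions $\Gamma_{n+1}\subseteq\Gamma_n$ must be strict. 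Following the marker idea of \cite[Proposition~3.5]{LW}, I would then fix the Cantor alphabet $K=\{0,1\}^{\NN}$, form the point $\eta\in K^{\Gamma}$ with $\eta(g)=(\chi_{\Gamma_n}(g))_{n\geq1}$, and let $X\subseteq K^{\Gamma}$ be the closure of the orbit of $\eta$ under the left translation $\Gamma$-action $T$, where $(T^{g}\omega)(h)=\omega(g^{-1}h)$.

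The heart of the argument is to check that $(X,T,\Gamma)$ is a minimal aperiodic Cantor system with $\Gamma\le\Aut(T,\Gamma)$. For the first part one identifies $X$ with the $\Gamma$-odometer $\widehat{\Gamma}:=\varprojlim_n\Gamma/\Gamma_n$ via $g\mapsto(g\Gamma_n)_n$, which realizes $\Gamma$ as a dense subgroup of the metrizable profinite group $\widehat\Gamma$; the latter is a Cantor set precisely because infinitely many inclusions are strict, $T$ becomes left translation (hence minimal by density of $\Gamma$, and free), and aperiodicity follows from minimality as in Lemma~\ref{lem:minimality}. (Alternatively, one re-runs the Lindenstrauss--Weiss minimality argument directly on the orbit closure of $\eta$, which is also what allows the construction to be made non-equicontinuous.) For the centralizer, given $\gamma\in\Gamma$ let $\rho_\gamma\colon K^{\Gamma}\to K^{\Gamma}$ be the right translation $(\rho_\gamma\omega)(h)=\omega(h\gamma)$; it commutes with every $T^{g}$, and normality of the $\Gamma_n$ gives $\rho_\gamma\eta=T^{\gamma^{-1}}\eta$, so $\rho_\gamma$ sends the orbit of $\eta$ into itself and therefore preserves $X$, giving a homeomorphism $\rho_\gamma|_X\in\Aut(T,\Gamma)$. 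By Lemma~\ref{lem:freeness} the map $\gamma\mapsto\rho_\gamma|_X$ is then an injective homomorphism, so $\Gamma\le\Aut(T,\Gamma)$. (This whole paragraph can instead be replaced by applying Corollary~\ref{cor:realisationProFiniteGroup} to the Cantor topological group $\widehat\Gamma\supseteq\Gamma$.)

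Finally, Proposition~\ref{prop:injection} applied to $(X,T,\Gamma)$ produces a Cantor minimal system $(Y,S,\ZZ)$ with $\Aut(T,\Gamma)\le\Aut(S,\ZZ)$, whence $\Gamma\le\Aut(S,\ZZ)$. Given Proposition~\ref{prop:injection} and Corollary~\ref{cor:realisationProFiniteGroup}, the argument is largely bookkeeping; the main, and fairly mild, obstacle is the group-theoretic preparation of the $\Gamma_n$ (and the reduction to infinite $\Gamma$) together with, if one wants a self-contained proof in the spirit of \cite{LW} rather than quoting the odometer, re-establishing minimality and aperiodicity for the orbit closure of the marker point $\eta$.
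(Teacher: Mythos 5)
Your proof is correct, but it takes a different route from the paper. You realize $\Gamma$ as a dense subgroup of the Cantor group $\widehat{\Gamma}=\varprojlim_n\Gamma/\Gamma_n$ built from a nested chain of finite-index normal subgroups with trivial intersection (which indeed exists for any countable residually finite group, with no finite generation hypothesis, and is infinite profinite once you reduce to $\Gamma$ infinite), and then you quote Corollary \ref{cor:realisationProFiniteGroup} (hence Proposition \ref{prop:injection}) to pass to a $\ZZ$-system. This is precisely the odometer route that the paper itself sketches just before the proposition for \emph{finitely generated} residually finite groups; your contribution is the (correct) observation that the group-theoretic input extends verbatim to the infinitely generated countable case. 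The paper deliberately does something else: it takes a faithful $\Gamma$-action on a Cantor set $X$ with a dense set of finite orbits, and runs a Lindenstrauss--Weiss block construction to build directly a minimal generalized subshift $X_\infty\subseteq X^{\ZZ}$ on which the letterwise $\Gamma$-action commutes with the shift and is faithful. What the paper's construction buys is independence from the odometer machinery and, as announced in the text, a non-equicontinuous underlying $\Gamma$-action, whereas your base system is the equicontinuous odometer; what your route buys is brevity, since minimality, aperiodicity and the embedding of $\Gamma$ into the centralizer are immediate from the group structure of $\widehat{\Gamma}$, with Proposition \ref{prop:injection} doing the remaining work. One small imprecision: to get injectivity of $\gamma\mapsto\rho_\gamma|_X$ you should invoke freeness (aperiodicity) of the left-translation action $T$ itself, via $\rho_\gamma\eta=T^{\gamma^{-1}}\eta$, or simply note that distinct right translations of $\widehat{\Gamma}$ are distinct maps; Lemma \ref{lem:freeness} concerns the freeness of the $\Aut$-action and is not the statement you need there, though the fix is trivial.
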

\begin{proof}
Recall that a countable residually finite group $\Gamma$ always admits a faithful action on the Cantor set $X$ where the set of points  with a finite orbit is dense \cite[Theorem 2.7.1]{CSC}.  We will construct a generalized subshift on the alphabet $X$ (see Section \ref{generalized-subshift}). 

For each $n \ge 1$ and $\gamma\in \Gamma$  let $\gamma_{n} \colon X^n \to X^n$ be the homeomorphism defined as $\gamma_{n}(x_{1}, \ldots,x_{n}) = (\gamma(x_{1}), \ldots, \gamma(x_{n}))$, and let $\gamma_{\omega} \colon X^\ZZ \to X^\ZZ$  be the homeomorphism defined as $\gamma_{\omega}((x_{n})_{n}) = (\gamma(x_{n}))_{n}$. This provides  $\Gamma$-actions on $X^{n}$ and $X^{\ZZ}$.

It follows that the  shift map $\sigma \colon X^\ZZ \to X^\ZZ$ and $\gamma_{\omega}$ commute.  We have to construct now the specific  minimal generalized subshift.

Set  $B_{1}$  as the collection of all the words of length $1$, in the alphabet $X$.
Fix $n\ge 2$ and suppose that at the step $n-1$ we have defined a collection of words $B_{n-1}$ such that:
\begin{itemize}
  \item All of its elements have the same length $\ell_{n-1}$.
  \item The set $B_{n-1}$  is closed on $X^{\ell_{n-1}}$ and  preserved by  each map $\gamma_{\ell_{n-1}}$.
  \item The set of  points with a finite $\Gamma$-orbit is dense in $B_{n-1}$.
\end{itemize}
Let $\{x_{1,n}, \dots, x_{k_{n},n}\} \subset B_{n-1}B_{n-1} $ be a finite   $\Gamma$-invariant set that is $1/n$-dense in $B_{{n-1}}B_{n-1} \subset X^{2\ell_{n-1}}$. Of course,  each $x_{i,n}$ has a finite $\Gamma$-orbit. 
Set  $B_{n}$ to be the collection of all the words $w$ that are a concatenation of words in $B_{n-1}$ of  the form 
$$w =  w_{1}\ldots w_{n}x_{s(1),n} \ldots  x_{s(k_{n}),n},$$   
where $w_{1}, \ldots, w_{n}$ are words in $B_{n-1}$ and $s$ is a permutation of $\{1, \ldots, k_{n}\}$. It is clear that each word in $B_{n}$ is of length $(n+2k_{n})\ell_{n-1} =: \ell_{n}$, each  map $\gamma_{\ell_{n}}$, with $\gamma \in \Gamma$, preserves the   set  $B_{n}$ and that the  points with a finite $\Gamma$-orbit are dense in  the closed set $B_{n} \subset X^{\ell_{n-1}}$.

Let $X_{n}$ be the subshift whose any element is a concatenation of words in $B_{n}$ and let $X_{\infty}$ be the subshift $X_{\infty} = \bigcap_{n\ge 1} X_{n}$. The minimality follows from the next claim. 

\medskip

{\bf {\underline{Claim.}}}\label{cl:minimality}
For each $n\ge 1$, for any $x \in X_{n+1}$ and  $y \in X_{n}$, there exists an integer $ | \ell | \le \ell_{n}  $ such that ${\rm Dist}(\sigma^\ell (x), y) \le  1/n + 3/2^{\ell_{n-1}}$.

   For two words $w_{1}, w_{2} \in B_{n}$, there exists a $x_{i,n+1}$ such that the word $w_{1}w_{2}$ is at distance less than $1/(n+1)$ from $x_{i,n+1}$ in $X^{2\ell_{n} }$.  
Since $y$ is a concatenation of words in $B_{n}$ and since any word $x_{i,n+1}$ appears in each word of $ B_{n+1}$, a direct computation provides the claim.

Clearly $X_{\infty}$ is invariant by every $\gamma_{w}$.  
It remains to check that $\Gamma$ acts faithfully on $X_{\infty}$.  Let $\gamma \in \Gamma$  be such that $\gamma_{w}$ has a fixed point $x \in X_{\infty}$, then $\gamma$ fixes each letter of $x$. A direct induction proves that the set of letters occurring in the words $x_{i,n}$, $n \ge 1$ is dense in $X$. Hence in particular, $\gamma$ fixes a dense set of points in $X$, so $\gamma$ is the identity.
\end{proof}

Of independent interest, we also deduce that the property ``to be a subgroup of automorphism of an aperiodic minimal  Cantor system'' is stable by direct product.
\begin{coro}\label{cor:StableWreathCartesianProduct}
Let $(X,T,G)$ and $(X,S,H)$ be two aperiodic Cantor minimal systems for two countable groups $G$ and $H$. Then there exists a $\ZZ$ Cantor minimal system $(X,R,\ZZ)$ such that ${\Aut}(T,G)\oplus {\Aut}(S,H) \le {\Aut}(R,\ZZ)$. 
\end{coro}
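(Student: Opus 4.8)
The plan is to reduce, via Proposition~\ref{prop:injection}, to exhibiting $\Aut(T,G)\oplus\Aut(S,H)$ inside the centralizer of a \emph{single} aperiodic minimal Cantor system over a countable group; the natural candidate is the direct product of the two given systems. So the first step is to consider the action of the countable group $G\times H$ on the Cantor set $X\times X$ given by $(T\times S)^{(g,h)}(x_1,x_2)=(T^g x_1,S^h x_2)$, and to check that $(X\times X,T\times S,G\times H)$ is again an aperiodic minimal Cantor system. Aperiodicity is immediate: $(T\times S)^{(g,h)}(x_1,x_2)=(x_1,x_2)$ forces $T^g x_1=x_1$ and $S^h x_2=x_2$, hence $g=1_G$ and $h=1_H$ by freeness of $T$ and of $S$. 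For minimality, given $(x_1,x_2)$ and a basic open set $U_1\times U_2$, minimality of $(X,T,G)$ provides $g$ with $T^g x_1\in U_1$ and minimality of $(X,S,H)$ provides $h$ with $S^h x_2\in U_2$, so the $(G\times H)$-orbit of $(x_1,x_2)$ meets $U_1\times U_2$.

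The second step is to embed $\Aut(T,G)\oplus\Aut(S,H)$ into $\Aut(T\times S,G\times H)$ through the map $(\phi,\psi)\mapsto\phi\times\psi$, where $(\phi\times\psi)(x_1,x_2)=(\phi(x_1),\psi(x_2))$. Each $\phi\times\psi$ is a self-homeomorphism of $X\times X$ commuting with every $T^g\times\id$ and every $\id\times S^h$ (because $\phi$ commutes with $T$ and $\psi$ with $S$), hence with every $(T\times S)^{(g,h)}=(T^g\times\id)\circ(\id\times S^h)$; thus $\phi\times\psi\in\Aut(T\times S,G\times H)$. The assignment $(\phi,\psi)\mapsto\phi\times\psi$ is visibly a group homomorphism, and it is injective since $\phi\times\psi=\id_{X\times X}$ immediately gives $\phi=\id_X$ and $\psi=\id_X$.

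The last step is to apply Proposition~\ref{prop:injection} to the aperiodic minimal Cantor system $(X\times X,T\times S,G\times H)$ over the countable group $G\times H$, which yields a $\ZZ$ Cantor minimal system $(X,R,\ZZ)$ with $\Aut(T\times S,G\times H)\le\Aut(R,\ZZ)$; concatenating the inclusions gives $\Aut(T,G)\oplus\Aut(S,H)\le\Aut(T\times S,G\times H)\le\Aut(R,\ZZ)$. The only substantive input is Proposition~\ref{prop:injection}, and everything else is the elementary bookkeeping above. The single point requiring care — and the only place the argument could plausibly go wrong — is the minimality of the product system: a product of minimal systems is \emph{not} minimal for the diagonal action in general, and what saves us here is precisely that we use the product group $G\times H$, so the two coordinates may be moved independently.
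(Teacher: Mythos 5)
Your proposal is correct and follows essentially the same route as the paper: both pass to the product system $(X\times X, T\times S, G\oplus H)$, note it is aperiodic and minimal, embed $\Aut(T,G)\oplus\Aut(S,H)$ coordinatewise into its centralizer, and conclude by Proposition \ref{prop:injection}. You simply spell out the verifications (freeness, minimality of the product action, injectivity of $(\phi,\psi)\mapsto\phi\times\psi$) that the paper leaves as observations.
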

%
\begin{proof}
 Observe that the direct product  ${\Aut}(T,G)\oplus  {\Aut}(S,H)$ is a subgroup of the centralizer of  the  product system on the space $X \times X$ for the product action of $G \oplus H$. This action is aperiodic and minimal so  a direct application of Proposition \ref{prop:injection} gives the conclusion. 
\end{proof}

{\it Acknowledgment.} The authors thank an anonymous referee for his careful reading and pointing out mistakes in the initial version of this paper.


\begin{thebibliography}{99}
\bibitem{ABT}  N. Aubrun, S. Barbieri, and S. Thomass\'e. 
{\it Realization of aperiodic subshifts and uniform densities in groups},  
 Groups, Geom. Dyn. {\bf 13} (2019), 107--129 

%

\bibitem{BaakeRobertsYassawi} M. Baake, J. Roberts, and R. Yassawi.
{\it Reversing and extended symmetries of shift spaces}, 
Discrete and continuous dynamical systems,
 {\bf 38} (2), February 2018 pp. 835-866.
 
\bibitem{BLR} Boyle, M.; Lind, D., and Rudolph, D. {\it The automorphism group of a shift of finite type},  Trans. Amer. Math. Soc.,  (1) {\bf 306} 71--114, 1988.  


\bibitem{CSC}  T. Ceccherini-Silberstein,  and  M. Coornaert
{\it Cellular automata and groups},
(2009) Springer New York. (pp. 778--791)



\bibitem{CP08}  M. I. Cortez, and S.  Petite
 {\it $G$-odometers and their almost 1-1 extensions}.  
J. London Math. Soc. (2) {\bf 78} (2008) 1--20. 


\bibitem{CovenQuasYassawi} E. Coven, A. Quas, and R. Yassawi.
  {\it Computing automorphism groups of shifts using atypical equivalence classes},
    Discrete Anal. 2016, Paper No. 611, 28 pp.

\bibitem{CyrKra} V. Cyr, and B. Kra.
  {\it The automorphism group of a shift of subquadratic growth},
   Proc. Amer. Math. Soc. {\bf 144} (2016), no. 2, 613–621.

\bibitem{CyrKra2} V. Cyr, and B. Kra.
  {\it The automorphism group of a shift of linear growth: beyond transitivity}, Forum Math. Sigma {\bf 3} (2015), e5, 27 pp. 

  \bibitem{CyrKra3} V. Cyr, and B. Kra.
 {\it  The automorphism group of a minimal shift of stretched exponential growth.}
   J. Mod. Dyn.  10:483 --495, 2016. 


\bibitem{CyrKra5} V. Cyr, J. Franks, B. Kra, and S. Petite.
 {\it  Distortion and the automorphism group of a shift.}
   J. Mod. Dyn.  {\bf 13} (2018), 147--161
    


\bibitem{DonosoDurandMaassPetite} S. Donoso , F. Durand, A. Maass, and S. Petite.
  {\it On automorphism groups of low complexity subshifts. }
   Ergodic Theory and Dynam. Systems. 36:64--95, 2016. 

\bibitem{DonosoDurandMaassPetite17} {S. Donoso, F. Durand, A. Maass, and S. Petite}. 
{\it On automorphism groups of Toeplitz subshifts}, {Discrete Analysis} 2017:11, 19 pp.



%
\bibitem{DurandOrmesPetite} F. Durand; N. Ormes, and  S. Petite
 {\it Self-induced systems}, J. Anal. Math. {\bf 135} (2018), no. 2, 725--756.


\bibitem{GPS99} {T. Giordano, I. F.  Putnam, and C.  Skau,} {\it Full groups of Cantor minimal systems}. 
Israel J. Math. {\bf 111} (1999), 285--320. 

\bibitem{GTWZ} {E. Glasner, T. Tsankov, B. Weiss and A. Zucker}
{\it Bernoulli disjointness}
\texttt{arXiv:1901.03406}  




\bibitem{HjorthMolberg} G.  Hjorth, and  M. Molberg {\it Free continuous actions on zero-dimensional spaces},
 Topology and its applications  (2006), {\bf 153.7} 1116--1131.


\bibitem{JuschenkoMonod} Juschenko, K., and Monod, N. 
{\it Cantor systems, piecewise translations and simple amenable groups.} 
Ann. of Math. (2) {\bf 178} (2013), no. 2, 775--787.



\bibitem{LW}
E. Lindenstrauss,  and B. Weiss.
{\it Mean topological dimension.}
  Israel J. Math. {\bf 115} (2000) 1--24.

\bibitem{Matui} Matui, H. {\it Some remarks on topological full groups of Cantor minimal systems},  Internat. J. Math. {\bf 17} (2006), no. 2, 231--251. 

\bibitem{Medynets} Medynets, K. 
{\it Reconstruction of orbits of Cantor systems from full groups}, 
Bull. Lond. Math. Soc.{\bf  43} (2011), no. 6, 1104--1110.


\bibitem{SchraudnerSalo} {V. Salo and M. Schraudner}  Oral communication {\it Automorphism groups of subshifts via group extensions}. 




\bibitem{Witte} D. Witte,
{\it Arithmetic groups of higher Q-rank cannot act on 1-manifolds}.  
Proc. Amer. Math. Soc. {\bf 122} (1994), no. 2, 333--340. 
\end{thebibliography}
\end{document}